\documentclass[oneside]{amsart}

\usepackage{microtype, mathrsfs, xfrac, amssymb, xspace}
\usepackage{amsmath, amsthm, url}
\usepackage[latin1]{inputenc}
\usepackage{todonotes}
 \usepackage{tikz-cd}
\usepackage[all]{xy}
\newtheorem{theorem}{Theorem}[section]
\newtheorem{lemma}[theorem]{Lemma}
\newtheorem{proposition}[theorem]{Proposition}
\newtheorem{corollary}[theorem]{Corollary}

\usepackage{hyperref}
\hypersetup{
    colorlinks=true,
    linkcolor=green,
    filecolor=magenta,      
    urlcolor=blue,
}
\usepackage{todonotes}

\usepackage{pdfsync}

\theoremstyle{definition}
\newtheorem{notation}[theorem]{Notation}
\newtheorem{definition}[theorem]{Definition}
\newtheorem{example}[theorem]{Example}
\newtheorem{question}[theorem]{Question}
\newtheorem{construction}[theorem]{Construction}

\newtheorem{remark}[theorem]{Remark}

\newcommand{\hyph}{\text{-}}
\newcommand{\cat}[1]{\mathbf{#1}}

\DeclareMathOperator{\Spec}{Spec}
\DeclareMathOperator{\MaxSpec}{MaxSpec}

\DeclareMathOperator{\Tr}{Tr}

\DeclareMathOperator{\id}{id}
\DeclareMathOperator{\Pic}{Pic}

%\numberwithin{equation}{section}

% Ben's notations

\newcommand{\et}{\textrm{\'et}}
\newcommand{\RR}{\mathbb{R}}
\newcommand{\FF}{\mathbb{F}}
\newcommand{\ZZ}{\mathbb{Z}}
\newcommand{\MM}{\mathbb{M}}
\renewcommand{\AA}{\mathbb{A}}
\newcommand{\CH}{\mathrm{CH}}
\newcommand{\PP}{\mathbb{P}}
\newcommand{\tensor}{\otimes}
\newcommand{\CC}{\mathbb{C}}
\newcommand{\RRP}{\RR P}
\newcommand{\Hoh}{\mathrm{H}}
\newcommand{\iso}{\cong}
\newcommand{\A}{\mathbb{A}}
\newcommand{\Aone}{{\A^1}}
\newcommand{\sh}[1]{\mathcal{#1}}
\newcommand{\sm}{\setminus}
\newcommand{\Gm}{\mathbb{G}_m}
\DeclareMathOperator{\Mor}{Mor}

\newcommand{\pr}{\operatorname{pr}}
\newcommand{\op}{\textrm{op}}

\newcommand{\isomto}{\overset{\cong}{\longrightarrow}}

%

% Absolute value notation

%
\makeatletter
\newcommand*{\relrelbarsep}{.386ex}
\newcommand*{\relrelbar}{%
 \mathrel{%
 \mathpalette\@relrelbar\relrelbarsep
 }%
}
\newcommand*{\@relrelbar}[2]{%
 \raise#2\hbox to 0pt{$\m@th#1\relbar$\hss}%
 \lower#2\hbox{$\m@th#1\relbar$}%
}
\providecommand*{\rightrightarrowsfill@}{%
 \arrowfill@\relrelbar\relrelbar\rightrightarrows
}
\providecommand*{\leftleftarrowsfill@}{%
 \arrowfill@\leftleftarrows\relrelbar\relrelbar
}
\providecommand*{\xrightrightarrows}[2][]{%
 \ext@arrow 0359\rightrightarrowsfill@{#1}{#2}%
}
\providecommand*{\xleftleftarrows}[2][]{%
 \ext@arrow 3095\leftleftarrowsfill@{#1}{#2}%
}
\makeatother

% Blank box placeholder for figures (to avoid requiring any
% particular graphics capabilities for printing this document).

%%% Local Variables:
%%% mode: latex
%%% TeX-master: "singular"
%%% End:

\sloppy
\hyphenation{Reich-stein}
\begin{document}

\title{Classifying spaces for \'etale algebras with generators}

% Information for second author
\author{Abhishek Kumar Shukla}

\thanks{The first author was partially supported by a graduate fellowship from the Science and Engineering Research
 Board, India.}
\address{Department of Mathematics\\
 University of British Columbia\\
 Vancouver, BC V6T 1Z2\\Canada}
\email{abhisheks@math.ubc.ca}

\author{Ben Williams}

% 1984 Mathematics Road\\
\address{Department of Mathematics\\
 University of British Columbia\\
 Vancouver, BC V6T 1Z2\\Canada}
\thanks{The second author was partially supported by an NSERC discovery grant}
\email{tbjw@math.ubc.ca}

 % General info
\subjclass[2010]{Primary 13E15; Secondary 14F25, 14F42, 55R40}
%\pagecount{23}

%13A99 None of the above, but in this section
%13Exx Chain conditions, finiteness conditions
%13E05 Noetherian rings and modules
%13E10 Artinian rings and modules, finite-dimensional algebras
%13E15 Rings and modules of finite generation or presentation; number of
%generators
%13E99 None of the above, but in this section
%14B25 Local structure of morphisms: \'etale, flat, etc. 

%
% \date{January 1, 2001 and, in revised form, June 22, 2001.}
%%%%%%%%%%%%%%%%%%%%%%%%%%%%%%%%%%%%%%%%
%\keywords{\'etale algebra}

\begin{abstract} 
 We construct a scheme $B(r; \AA^n)$ such that a map $X \to B(r; \AA^n)$ corresponds to a degree-$n$ \'etale algebra
 on $X$ equipped with $r$ generating global sections. We then show that when $n=2$, i.e., in the quadratic \'etale
 case, that the singular cohomology of $B(r; \AA^n)(\RR)$ can be used to reconstruct a famous example of
 S.~Chase and to extend its application to showing that there is a smooth affine $r-1$-dimensional $\RR$-variety on
 which there are \'etale algebras $\sh A_n$ of arbitrary degrees $n$ that cannot be generated by fewer than $r$
 elements. This shows that in the \'etale algebra case, a bound established by U.~First and Z.~Reichstein in \cite{first} is sharp.
\end{abstract}

\maketitle

\section{Introduction}
Given a topological group $G$, one may form the \textit{classifying space}, well-defined up to homotopy equivalence, as
the base space of any numerable principal $G$-bundle $EG \to BG$ where the total space is contractible,
\cite[Theorem 7.5]{Dold1963}. The space $BG$ is a universal space for $G$-bundles, in that the set
of homotopy classes of maps $[X, BG]$ is in natural bijection with the set of numerable principal $G$-bundles on $X$.

If $G$ is a finite nontrivial group, then $BG$ is necessarily infinite dimensional, \cite{Swan1960}, and so there is no
hope of producing $BG$ as a variety even over $\CC$. Nonetheless, as in \cite{Totaro}, one can approximate $BG$ by
taking a large representation $V$ of $G$ on which $G$ acts freely outside of a high-codimension closed set $Z$, and such
that $(V-Z)/G$ is defined as a quasiprojective scheme. The higher the codimension of $Z$ in $V$, the better an
approximation $(V-Z)/G$ is to the notional $BG$.

In this paper, we consider the case of $G=S_n$, the symmetric group on $n$ letters. The representations we consider as
our $V$s are the most obvious ones, $r$ copies of the permutation representation of $S_n$ on $\AA^n$. The closed loci we
consider are minimal: the loci where the action is not free. We use the language of \'etale algebras to give an
interpretation of the resulting spaces. Our main result, Theorem \ref{pr:mainProp}, says that the scheme
$B(r; \A^n):=(V-Z)/S_n$ produced by this machine represents ``\'etale algebras equipped with $r$ generating global
sections'' up to isomorphism of these data. The schemes $B(r; \A^n)$ are therefore in the same relation to the group
$S_n$ as the projective spaces $\PP^r$ are to the group scheme $\Gm$.

Section \ref{preliminaries} is concerned with preliminary results on generation of \'etale algebras. The main
construction of the paper, that of $B(r; \AA^n)$, is made in Section \ref{sec:MainConst}, and the functor it represents
is described. Since are working with schemes, and not in a homotopy category, the space $B(r; \AA^n)$ does not classify
bundles, rather it represents a functor of ``bundles along with chosen generators'', which we now explain.

A choice of $r$ global sections generating an \'etale algebra $\sh A$ of degree $n$ on a scheme $X$ corresponds to
a map $\phi: X \to B(r; \AA^n)$. While the map $\phi$ is dependent on the chosen generating sections, we show in Section
\ref{sect.homotopy} that if one is prepared to pass to a limit, in a sense made precise there, that the $\Aone$-homotopy
class of a composite $\tilde \phi: X \to B(r; \AA^n) \to B(\infty; \AA^n)$ depends only on the isomorphism class of
$\sh A$ and not the generators. As a practical matter, this means that for a wide range of cohomology theories, $E^*$,
the map $E^*(\tilde \phi)$ depends only on $\sh A$ and not on the generators used to define it.

In Section \ref{sec:MotCoh}, working over a field, we observe that the motivic cohomology, and therefore the Chow
groups, of the varieties $B(r; \AA^2)$ has already been calculated in \cite{dugger_hopf_2007}.

A degree-$2$ or \textit{quadratic} \'etale algebra $\sh A$ over a ring $R$ carries an involution $\sigma$ and a trace
map $\Tr: \sh A \to R$. There is a close connection between $\sh A$ and the rank-$1$ projective module $\sh L = \ker (\Tr)$. In Section
\ref{sec:Relation}, we show that the algebra $\sh A$ can be generated by $r$ elements if and only if the projective
module $\sh L$ can be generated by $r$ elements. 

A famous counterexample of S.~Chase, appearing in \cite{swan}, shows that there is a smooth affine $r-1$-dimensional
$\RR$-variety $\Spec R$ and a line bundle $\sh L$ on $\Spec R$ requiring $r$ global sections to generate. This shows a
that a bound of O.~Forster \cite{forster} on the minimal number of sections required to generate a line bundle on
$\Spec R$, namely $\dim R +1$, is sharp. In light of Section \ref{sec:Relation}, the same smooth affine $\RR$-variety of
dimension $r-1$ can be used to produce
\'etale algebras $\sh A$, of arbitrary degree $n$, requiring $r$ global sections to generate. This fact was observed
independently by M.~Ojanguren. It shows that a bound established by U.~First and Z.~Reichstein in \cite{first} is sharp
in the case of \'etale algebras: they can always be generated by $\dim R + 1$ global sections and this cannot be
improved in general. The details are worked out in Section \ref{sec:Chase}, and we incidentally show that the example of
S.~Chase follows easily from our construction of $B(r; \AA^2)$ and some elementary calculations in the singular
cohomology of $B(r; \AA^2)(\RR)$.

Finally, we offer some thoughts about determining whether the bound of First and Reichstein is sharp if one restricts to
varieties over algebraically closed fields.

\subsection{Notation and other preliminaries}
\begin{itemize}
\item All rings in this paper are assumed to be unital, associative, and commutative. 

\item $k$ denotes a base ring.

\item A \textit{variety} $X$ is a geometrically reduced, separated scheme of finite type over a field. We do not require the
base field to be algebraically closed, nor do we require varieties to be irreducible.
\item $C_2$ denotes the cyclic group of order $2$. 
\end{itemize}
 
We use the functor-of-points formalism (\cite[Part IV]{Eisenbudgeometryschemes2000}) heavily throughout, which is to say we view a scheme $X$ as the sheaf of sets it
represents on the big Zariski site of all schemes
\[ X(U) = \Mor_{\cat {Sch}}(U, X). \]

% In fact, the presheaf $X(\cdot)$ is a sheaf on the big Zariski site of all schemes, which is to say that if $W=
% \bigcup_{i \in I} U_i$
% is a cover of a scheme by Zariski open subschemes, then
% \[ X(W) \to \prod_{i \in I} X(U_i) \rightrightarrows \prod_{(i,j) \in I^2} X(U_i \cap U_j) \]
% is a coequalizer diagram.

% \begin{remark}
% The scheme $\AA^1 = \Spec \ZZ[t]$ represents the functor
% \[ X \mapsto \A^1(X) = \Gamma(X, \sh O_X) = \sh O_X(X). \]
% Similarly, $\AA^n$ represents the functor
% \[ X \mapsto \left(\sh O_X (X)\right)^n. \]
% This can be deduced from the case of affine $X$, where one has
% \[ \Hom_{\cat{{\ZZ}-Alg}}(\ZZ[t_1, \dots, t_n], R) = R^n \]
% as sets.
% \end{remark}

\section{\'Etale algebras}
\label{preliminaries}

Let $R$ be a ring and $S$ an $R$-algebra. Then there is a morphism of rings 
$\mu:S\otimes_R S^\op\to S$ sending $a\otimes b$ to $ab$. 
 We obtain an exact sequence 
\begin{align} \label{exact.seq}
0\to \ker(\mu) \to S\otimes_R S^\op \xrightarrow{\mu} S\to 0 
\end{align}

We recall (\cite[Chapter 4]{book.ford}) that an $R$-algebra $S$ is called \textit{separable} if $S$ is projective $S\otimes_R S^\op$-module.

%Separability as above, defined by Auslander and Goldman \cite{auslander-goldman}, generalizes to the setting when $S$ is not neccesarily assumed to be commutative. Central simple algebras over fields being a particular instance.

%When $S$ is commutative and a finitely generated $R$-module, separability as above is equivalent to classical separability of field extensions $k(\mathfrak{q})/k(\mathfrak{p})$ for all $\mathfrak{q}\in \Spec S$ and $\mathfrak{q}\cap R=\mathfrak{p}\in \Spec R$. Here $k(\mathfrak{q})$ and $k(\mathfrak{p})$ denote the residue fields of local rings $S_{\mathfrak{q}$ and $R_{\mathfrak{p}}$, respectively. 

\begin{definition}
 Let $R$ be a ring. A commutative $R$-algebra $S$ is called \textit{\'etale} if $S$ is a flat, separable,
 finitely presented $R$-algebra.
\end{definition}

\begin{proposition}
 Let $R$ be a commutative ring, and $S$ a commutative $R$-algebra. Then the following are equivalent:
 \begin{enumerate}
 \item \label{ai} $S$ is an \'etale $R$-algebra.
 \item \label{aii} $S$ is a finitely presented $R$-algebra and $\Spec S \to \Spec R$ is formally \'etale in the sense of
 \cite[Section 17.1]{EGA4.4}.
 \end{enumerate}
\end{proposition}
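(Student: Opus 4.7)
The plan is to use the module of K\"ahler differentials $\Omega_{S/R}$ as a bridge between the two conditions. I would begin by identifying $\Omega_{S/R}$ with $I/I^{2}$, where $I=\ker(\mu)$ from the sequence \eqref{exact.seq}; this is standard, given by the universal derivation $s\mapsto 1\otimes s - s\otimes 1 \bmod I^{2}$. Since formal unramifiedness of $\Spec S\to \Spec R$ is by definition the vanishing of $\Omega_{S/R}$, and formal \'etaleness is formal smoothness plus formal unramifiedness, the task reduces to comparing ``separable'' with the vanishing of $I/I^{2}$, modulo the well-known identification of ``flat and finitely presented'' with ``formally smooth and finitely presented'' coming from the infinitesimal lifting criterion in \cite[17.5.1]{EGA4.4}.

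For (\ref{ai})$\Rightarrow$(\ref{aii}): separability means that $\mu$ admits an $S\otimes_{R}S^{\op}$-linear section. Such a section corresponds to a central idempotent $e\in S\otimes_{R}S^{\op}$ with $\mu(e)=1$, so that $I=(1-e)$; then $I^{2}=I$ and thus $\Omega_{S/R}=0$, giving formal unramifiedness. For formal smoothness: the obstruction to lifting a map $S\to A/N$ through a square-zero extension $A\to A/N$ with $N^{2}=0$ lies in $\Hom_{S}(\Omega_{S/R},N)$, which is zero, so uniqueness is automatic, while existence follows from flatness and finite presentation by the standard lifting criterion.

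For (\ref{aii})$\Rightarrow$(\ref{ai}): formal smoothness together with finite presentation gives flatness by the same criterion, and formal unramifiedness gives $I=I^{2}$. The remaining point -- producing the idempotent $e$ that splits $\mu$ -- is the crux and the main obstacle. I would argue locally on $\Spec R$: after a standard finite-presentation limit reduction to a Noetherian base, each fiber $S\otimes_{R}\kappa(\mathfrak{p})$ is finitely presented over a field with vanishing module of differentials, hence a finite product of finite separable field extensions, so it admits a splitting idempotent. Flatness and finite presentation then allow one to lift such an idempotent to a Zariski neighborhood, and a Nakayama argument shows that the locus where a splitting exists is both open and closed, hence all of $\Spec R$. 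Gluing the resulting local idempotents -- which are forced to agree by their characterization as the unique central idempotent with $\mu(e)=1$ and $(1-e)I=0$ -- yields the global section of $\mu$ demanded by separability.
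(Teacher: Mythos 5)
Your reduction of formal unramifiedness to the vanishing of $\Omega_{S/R}\cong I/I^2$, and the derivation of $I=I^2$ from the separability idempotent, are correct and make the argument more self-contained than the paper, which simply cites \cite[Corollary 4.7.3]{book.ford} for (1)$\Rightarrow$(2) and \cite[Corollaire 17.6.2]{EGA4.4} together with \cite[Theorem 8.3.6]{book.ford} for the converse. However, the bridge you lean on is false as stated: ``flat and finitely presented'' is \emph{not} equivalent to ``formally smooth and finitely presented'' (the algebra $k[x]/(x^2)$ over a field $k$ is free and finitely presented but not formally smooth), and \cite[17.5.1]{EGA4.4} asserts no such thing. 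This matters in the (1)$\Rightarrow$(2) direction, where the existence half of the infinitesimal lifting property is exactly the hard part: what is true, and what you actually need, is that a flat, unramified, finitely presented algebra is \'etale. That statement uses the unramifiedness in an essential way (via the fibrewise criterion for smoothness: flat $+$ finitely presented $+$ smooth fibres $\Rightarrow$ smooth, the fibres being finite products of finite separable field extensions). As written, existence of liftings is asserted from flatness and finite presentation alone via a general principle that does not hold.

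In the (2)$\Rightarrow$(1) direction, the step you rightly identify as the crux --- producing the idempotent from $I=I^2$ --- is handled by a mechanism that fails in general: idempotents of $B\otimes_R\kappa(\mathfrak p)$ need not lift to any Zariski localization $B_f$ (already $\ZZ[i]$ over $\ZZ$ at the prime $5$ gives a counterexample), and flatness and finite presentation do not repair this; the proposed clopen-locus argument is not made precise enough to substitute for it. The correct and much shorter route is purely ideal-theoretic: since $S$ is a finitely presented $R$-algebra and $\mu$ is a surjection of finitely presented $R$-algebras, $I=\ker(\mu)$ is a finitely generated ideal of $S\otimes_R S$; a finitely generated ideal satisfying $I=I^2$ is generated by an idempotent (by the determinant trick one finds $e'\in I$ with $(1-e')I=0$), and then $e=1-e'$ satisfies $\mu(e)=1$ and $Ie=0$, which is precisely the splitting of $\mu$ demanded by separability. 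With that substitution, and with the implication ``flat $+$ unramified $+$ finitely presented $\Rightarrow$ \'etale'' invoked explicitly in the forward direction, your outline becomes a correct self-contained proof of the proposition.
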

\begin{proof}
 By \cite[Corollary 4.7.3]{book.ford}, we see that \ref{ai} implies \ref{aii}. Conversely, a finitely presented and
 formally \'etale map is flat and unramified \cite[Corollaire 17.6.2]{EGA4.4}, and a finitely generated commutative unramified
 $R$-algebra $S$ is separable, \cite[Theorem 8.3.6]{book.ford}.
\end{proof}

\begin{definition}
 An $R$-algebra $S$ is called \textit{finite \'etale} if $S$ is an \'etale $R$-algebra and a finitely generated $R$-module. 
\end{definition}

\begin{remark}\label{rem.finite_present}
If $S$ is a finitely presented $R$-algebra that is finitely generated as an $R$-module then it is also finitely
presented as an $R$-module (\cite[1.4.7]{ega4-1}). Moreover, finitely presented and flat modules are projective
(\cite[tag 058Q]{deJongStacksProject2017}), so a finite \'etale algebra $S$ over $R$ is, in particular, a projective $R$-module
of finite rank.
\end{remark}

\begin{definition}
 We say that an \'etale algebra is of \textit{degree $n$} if the rank of $S$ as a projective $R$-module is $n$. A
 degree-$n$ \'etale algebra is necessarily finite \'etale.
\end{definition}

Over a ring $R$, and for any integer $n>0$, there exists the trivial rank-$n$ \'etale algebra $R^n$ with componentwise
addition and multiplication. The next lemma states that all \'etale algebras are \'etale-locally isomorphic to the
trivial one.
\begin{lemma}\label{lem.etalg}
Let $R$ be a ring and $S$ an $R$-algebra. The following statements are equivalent:
\begin{itemize}
 \item $S$ is an \'etale algebra of degree $n$. 
 \item There is a finite \'etale $R$-algebra $T$ such that $S\otimes_R T\cong T^n$ as $T$-algebras. 
\end{itemize}
\end{lemma}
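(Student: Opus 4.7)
The plan is to prove the two directions of this biconditional separately. The substantial content lies in the forward direction; the converse will follow from faithfully flat descent.

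For the forward direction, I would argue by induction on the degree $n$, with the cases $n=0,1$ being trivial. Given $S$ étale of degree $n$ over $R$, separability means that the multiplication map $\mu \colon S\otimes_R S \to S$ from \eqref{exact.seq} is split as a map of $(S\otimes_R S)$-modules. This splitting produces a central idempotent $e \in S \otimes_R S$ with $\mu(e) = 1$, which yields a ring decomposition
\[ S \otimes_R S \;\cong\; S \,\times\, S', \]
where the first factor $e(S\otimes_R S)$ is identified with $S$ via $\mu$. Viewing $S\otimes_R S$ as an $S$-algebra through the left factor $a \mapsto a\otimes 1$, this is the base change of $S/R$ along $R \to S$, hence it is étale of degree $n$ as an $S$-algebra, from which I conclude that $S'$ is a finite étale $S$-algebra of degree $n-1$. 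By the induction hypothesis applied to the $S$-algebra $S'$, there is a finite étale $S$-algebra $T'$ with $S' \otimes_S T' \cong (T')^{n-1}$. Setting $T = T'$, regarded as an $R$-algebra via the composite $R \to S \to T'$ (which is finite étale by transitivity of finite étale morphisms), I would compute
\[ S \otimes_R T \;\cong\; (S \otimes_R S) \otimes_S T' \;\cong\; (S \times S') \otimes_S T' \;\cong\; T' \times (T')^{n-1} \;\cong\; T^n, \]
completing the induction. Note that the $T$ produced this way is faithfully flat over $R$, since each step in the induction extends a faithfully flat algebra by another.

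For the backward direction, I would assume that we have finite étale $T/R$, which we may take to be faithfully flat over $R$ (either by convention in interpreting ``\'etale locally'', or by enlarging $T$ suitably; e.g., tensoring with the canonical $T$ constructed in the forward direction once one has established that $S$ is an étale algebra of some rank). The algebra $T^n \cong S \otimes_R T$ is visibly étale of degree $n$ over $T$, being the trivial algebra. Each of the properties involved, namely flatness, finite presentation, separability, and the rank being $n$, descends along the faithfully flat map $R \to T$ (the module-theoretic descent statements appear for instance in \cite[tag 058S, tag 02L2]{deJongStacksProject2017}), so $S$ is étale of degree $n$ over $R$.

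The main obstacle is the inductive step in the forward direction, particularly the verification that the complementary factor $S'$ really is finite étale over $S$ of degree $n - 1$. This rests on identifying the $S$-algebra structure on $S \otimes_R S$ carefully so that $S \times S'$ is recognized as the base change of $S/R$ along $R \to S$, together with the rank calculation that splits $n = 1 + (n-1)$.
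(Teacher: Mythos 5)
Your argument is essentially correct, and it reconstructs the standard proof: the paper itself does not prove this lemma but defers entirely to Ford's book, and the induction you describe --- splitting $S\otimes_R S \cong S\times S'$ via the separability idempotent, recognizing $S\otimes_R S$ as the degree-$n$ base change of $S$ along $R\to S$ so that $S'$ is finite \'etale of degree $n-1$ over $S$, and then inducting --- is precisely the mechanism behind the cited results. The step you flag as the main obstacle (that a direct ring factor of a finite \'etale algebra is finite \'etale, with ranks adding) is genuinely the crux, but it is routine: flatness and finiteness pass to direct summands, finite presentation passes to the quotient by the ideal generated by an idempotent, and separability passes to quotients, so your outline closes.

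The one point deserving more care is the converse. As literally stated, ``finite \'etale $R$-algebra $T$'' does not force $T$ to be faithfully flat (e.g.\ $T$ could vanish on a clopen piece of $\Spec R$, or be $0$ outright), and without surjectivity of $\Spec T\to\Spec R$ the descent argument --- and indeed the implication itself --- fails. You are right to insist on faithful flatness, and the intended reading (consistent with the ``affine \'etale cover'' formulation of the subsequent lemma and with Ford's conventions) is that $T$ is a faithfully flat \'etale cover. However, your parenthetical repair via ``tensoring with the canonical $T$ constructed in the forward direction once one has established that $S$ is \'etale'' is circular as written --- you cannot invoke the forward direction for $S$ before knowing $S$ is \'etale --- so you should simply build faithful flatness into the hypothesis rather than try to recover it. With that reading, the descent of flatness, finite presentation, separability (e.g.\ by descending the separability idempotent, which is unique and hence satisfies the cocycle condition automatically), and rank all go through as you indicate.
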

%behind the scenes: this is combination of 4.4.6 and 4.6.11 in [4],
%1st assertion gives that S is faithfully flat (and hence faithful) and separable which is needed to apply 4.6.11
%conversely, 2nd assertion gives S is separable, flat (can be checked by passing to faithully flat cover, finite(+ projective implies finitely presented as an R-algebra)
A proof may be found in \cite[Corollary 1.1.16, Corollary 4.4.6, Proposition 4.6.11]{book.ford}.

We may extend this definition to schemes. Fix a ground ring $k$ throughout.

\begin{definition}
 Let $X$ be a $k$-scheme. Let $\mathcal{A}$ be a locally free sheaf of $\mathcal{O}_X$-algebras of constant rank $n$. We say that $\mathcal{A}$ is an \textit{\'etale $X$-algebra} or \'etale algebra
 over $X$ if for every open affine subset $U\subset X$ the $\sh O_X(U)$-algebra $\mathcal{A}(U)$ is an \'etale
 algebra. If the algebras $\sh A(U)$ are \'etale of rank $n$, we say $\sh A$ is a \textit{degree-$n$ \'etale algebra}.
\end{definition}

%\begin{remark} An \'etale algebra $\mathcal{A}$ over $X$ is a locally free $\mathcal{O}_X$-module since it is flat and
%finitely presented.
%\end{remark}

By Remark \ref{rem.finite_present} it is clear that a sheaf of degree-$n$ \'etale algebras $\mathcal{A}$ over $X$ is a
quasi-coherent sheaf of $\mathcal{O}_X$-modules.

If $X$ is a $k$-scheme and $n$ a positive integer, then there exists a trivial rank $n$ \'etale algebra
$\mathcal{O}^n_X$ with componentwise addition and multiplication.

%\begin{remark} If $X$ is affine the definition of \'etale algebra as above agrees with the algebraic one.
%\end{remark}

\begin{lemma} Let $X$ be a $k$-scheme and $\mathcal{A}$ be a finitely presented, quasi-coherent sheaf of $\mathcal{O}_X$-algebras. Then the following are
equivalent:
	\begin{itemize}
			\item $\mathcal{A}$ is an \'etale $X$-algebra of degree $n$.
			\item There is an affine \'etale cover $\{ U_i\xrightarrow{f_i} X\}$ such that $f_i^*\mathcal{A}\cong \mathcal{O}_{U_i}^n$ as $\mathcal{O}_{U_i}$-algebras. 
		\end{itemize}
\end{lemma}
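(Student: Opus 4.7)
The statement globalizes Lemma \ref{lem.etalg} from affine schemes to arbitrary $k$-schemes $X$. The plan is to reduce both implications to the affine case and then apply Lemma \ref{lem.etalg}, together with faithfully flat descent for the backward direction.

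For the forward implication, suppose $\mathcal{A}$ is an \'etale $X$-algebra of degree $n$. Choose an affine open cover $\{V_j = \Spec R_j\}$ of $X$. By the definition of an \'etale $X$-algebra, each $\mathcal{A}(V_j)$ is an \'etale $R_j$-algebra of degree $n$. Lemma \ref{lem.etalg} yields a finite \'etale $R_j$-algebra $T_j$ and an isomorphism $\mathcal{A}(V_j)\otimes_{R_j} T_j \cong T_j^n$ of $T_j$-algebras. Setting $U_j = \Spec T_j$ and letting $f_j : U_j \to V_j \hookrightarrow X$ be the composite (which is \'etale since $T_j/R_j$ is finite \'etale and $V_j \hookrightarrow X$ is an open immersion), one checks that $\{f_j\}$ covers $X$ and $f_j^*\mathcal{A} \cong \mathcal{O}_{U_j}^n$ as $\mathcal{O}_{U_j}$-algebras.

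For the backward implication, assume the cover $\{f_i : U_i \to X\}$ exists with $f_i^*\mathcal{A} \cong \mathcal{O}_{U_i}^n$. Being an \'etale algebra of degree $n$ is a property that can be checked on an affine open cover of $X$, so fix an affine open $V = \Spec R \subseteq X$ and let $S = \mathcal{A}(V)$. Setting $W_i = U_i \times_X V$, the family $\{W_i \to V\}$ is an \'etale cover and the composite cover $W = \coprod_i W_i \to V$ is faithfully flat and quasi-compact after passing to a finite subcover of the quasi-compact $V$. Writing $W = \Spec T$ with $T$ a faithfully flat \'etale $R$-algebra, we have $S \otimes_R T \cong T^n$. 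Now each of the properties we need---being finitely presented as an $R$-algebra, being flat over $R$, being projective of rank $n$ as an $R$-module, and being separable---descends along the faithfully flat morphism $\Spec T \to \Spec R$: finite presentation and flatness by \cite[tag 058S, tag 02L2]{deJongStacksProject2017}, rank and projectivity by combining flat descent of finite generation with faithful flatness, and separability since $\ker(\mu_{S/R}) \otimes_R T = \ker(\mu_{S\otimes T / T})$ splits off, so the splitting descends. Therefore $S$ is an \'etale $R$-algebra of degree $n$, so $\mathcal{A}$ is an \'etale $X$-algebra of degree $n$.

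The only mildly delicate step is the descent argument in the second paragraph: one must be careful that ``\'etale of degree $n$'' is an fpqc-local property on the base, but since each constituent property (finite presentation, flatness, rank $n$, separability) descends individually, assembling them gives the result. The forward direction is essentially a routine unpacking of Lemma \ref{lem.etalg}.
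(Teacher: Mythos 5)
Your proof is correct, and it fills in substance that the paper simply waves away: the paper's entire proof is ``This is immediate from Lemma \ref{lem.etalg}.'' Your forward direction is exactly the intended reduction (trivialize over an affine open cover using the finite \'etale $T_j$ supplied by Lemma \ref{lem.etalg}; one should note in passing that the $T_j$ are faithfully flat over $R_j$, so that the $\Spec T_j$ really do cover $X$). Your backward direction, however, is where you add genuine content: Lemma \ref{lem.etalg} produces and consumes a \emph{finite} \'etale $T$, whereas the cover in the statement is merely \'etale, so the affine lemma cannot be quoted verbatim in that direction. Your substitute---descend each defining property (finite presentation, flatness, projectivity and rank, separability) individually along the faithfully flat \'etale morphism $\Spec T \to \Spec R$---is the standard and correct fix, and your observation that separability descends because projectivity of the finitely presented module $S$ over $S\otimes_R S$ descends along $S\otimes_R S \to (S\otimes_R S)\otimes_R T$ is the right mechanism. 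Two small technicalities you elide but which are routine: the fiber products $U_i\times_X V$ need not be affine if $X$ is not separated, so one should refine by affine opens before forming $\Spec T$; and quasi-compactness of $V$ together with openness of \'etale maps justifies the passage to a finite subcover. Neither affects the validity of the argument.
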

\begin{proof}
 This is immediate from Lemma \ref{lem.etalg}.
\end{proof}

\begin{definition}
 If $A$ is an algebra over a ring $R$, then a subset $\Lambda \subset A$ is said to \textit{generate} $A$ over
 $R$ if no strict $R$-subalgebra of $A$ contains $\Lambda$.
\end{definition}

If $\Lambda = \{ a_1, \dots, a_r\} \subset A$ is a finite subset, then the smallest subalgebra of $A$ containing
$\Lambda$ agrees with the image of the evaluation map $k[x_1, \dots, x_r] \overset{(a_1,\dots, a_r)}{\to} A$. Therefore,
saying that $\Lambda$ generates $A$ is equivalent to saying this map is surjective.

\begin{proposition} \label{pr:genIsLocal}
 Let $\Lambda = \{a_1, \dots, a_r\}$ be a finite set of elements of $A$, an algebra over a ring $R$. The following are
 equivalent:
 \begin{enumerate}
 \item $\Lambda$ generates $A$ as an $R$-algebra.
 \item There exists a set of elements $\{f_1, \dots, f_n\} \subset R$ that generate the unit ideal and such that, for
 each $i \in \{1, \dots, n\}$, the
 image of $\Lambda$ in $A_{f_i}$ generates $A_{f_i}$ as an $R_{f_i}$-algebra.
 \item For each $\mathfrak m \in \MaxSpec R$, the image of $\Lambda$ in $A_{\mathfrak m}$ generates $A_{\mathfrak m}$
 as an $R_{\mathfrak m}$-algebra.
 \item Let $k(\mathfrak m)$ denote the residue field of the local ring $R_{\mathfrak m}$. For each $\mathfrak m\in \MaxSpec R$, the image of $\Lambda$ in $A\otimes_R k(\mathfrak m)$ generates $A\otimes_R k(\mathfrak m)$
 as a $k({\mathfrak m})$-algebra.
 \end{enumerate}
\end{proposition}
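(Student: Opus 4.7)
\emph{Proof plan.} The key reduction, foreshadowed in the paragraph before the proposition, is to rephrase everything in terms of the cokernel of the evaluation map. Let $\phi: R[x_1,\dots,x_r] \to A$ send $x_i$ to $a_i$, write $B = \mathrm{image}(\phi) = R[\Lambda] \subseteq A$, and let $M = A/B$, viewed as an $R$-module. Since the formation of $\phi$ commutes with localization of $R$ and with base change along $R \to k(\mathfrak m)$, each of the four conditions translates into a vanishing statement: condition (1) says $M=0$; condition (2) says $M_{f_i}=0$ for each $i$, with $(f_1,\dots,f_n)=R$; condition (3) says $M_{\mathfrak m}=0$ for every maximal $\mathfrak m$; and condition (4) says $M\otimes_R k(\mathfrak m)=0$ for every maximal $\mathfrak m$.

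With this dictionary, the equivalences (1) $\Leftrightarrow$ (2) $\Leftrightarrow$ (3) reduce to standard local-to-global facts about $R$-modules. The implication (1) $\Rightarrow$ (2) takes $f_1 = 1$. For (2) $\Rightarrow$ (3), any maximal ideal $\mathfrak m$ must omit some $f_i$ since the $f_i$ generate the unit ideal, and $M_{\mathfrak m}$ is then a further localization of $M_{f_i} = 0$. For (3) $\Rightarrow$ (1) one uses the familiar fact that an $R$-module vanishes iff all of its localizations at maximal ideals vanish. Finally, (3) $\Rightarrow$ (4) is immediate from the identification $M \otimes_R k(\mathfrak m) = M_{\mathfrak m}/\mathfrak m R_{\mathfrak m} M_{\mathfrak m}$.

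The substantive content is in (4) $\Rightarrow$ (3). From $M \otimes_R k(\mathfrak m) = 0$ one obtains $M_{\mathfrak m} = \mathfrak m R_{\mathfrak m} M_{\mathfrak m}$, and one then wants to conclude $M_{\mathfrak m} = 0$ by Nakayama's lemma. The obstacle --- really the only nontrivial input in the argument --- is that Nakayama requires $M_{\mathfrak m}$ to be finitely generated over $R_{\mathfrak m}$. This is automatic once $A$ is finitely generated as an $R$-module, which is the setting in which the proposition is actually used: by Remark~\ref{rem.finite_present}, any degree-$n$ \'etale $R$-algebra is projective of finite rank, hence finitely generated, as an $R$-module, so $M = A/B$ inherits finite generation and Nakayama applies. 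Everything else in the proof is routine bookkeeping with localization.
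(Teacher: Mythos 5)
Your reduction to the vanishing of the cokernel $M$ of the evaluation map is exactly the paper's approach: the published proof simply observes that generation is surjectivity of $R[x_1,\dots,x_r]\to A$ and then asserts that conditions (2)--(4) are ``well-known equivalent conditions'' for this map of $R$-modules to be an epimorphism. Your extra care over (4) $\Rightarrow$ (3) is warranted, and it is the one place where the paper's blanket assertion is too quick: for an arbitrary $R$-algebra $A$ that implication can fail. For instance, take $R=\ZZ$ and $A=\ZZ\oplus \mathbb{Q}$ with $\mathbb{Q}$ a square-zero ideal and $\Lambda=\{0\}$; then $A\otimes_\ZZ\FF_p\cong\FF_p$ is generated by the image of $\Lambda$ for every $p$, yet $\Lambda$ generates only $\ZZ\subsetneq A$. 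As you note, Nakayama requires $M_{\mathfrak m}$ to be finitely generated, which holds in every use the paper makes of the proposition because there $A$ is a degree-$n$ \'etale algebra, hence a finite projective $R$-module by Remark~\ref{rem.finite_present}. So your argument is correct exactly where the statement is correct, and you have isolated a genuine (if harmless in context) gap in the stated generality; it would be worth recording the finiteness hypothesis explicitly for the equivalence with (4).
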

\begin{proof}
 In the case of a finite subset, $\Lambda = \{ a_1, \dots, a_r\}$, the condition that $\Lambda$ generates $A$ is
 equivalent to the surjectivity of the evaluation map $R[x_1, \dots, x_r] \to A$.

 The question of generation is therefore a question of whether a certain map is an epimorphism in the category of
 $R$-modules, and conditions (2)-(4) are well-known equivalent conditions saying that this map is an epimorphism.
\end{proof}

% Since any \'etale algebra over a field can be generated by a single element, this proposition implies that any \'etale
% algebra over a local ring can be generated by a single element.

Using Proposition \ref{pr:genIsLocal}, we extend the definition of ``generation of an algebra'' from the case where the base is affine to
the case of a general scheme.

\begin{definition} \label{def:genOfAlg} Let $\mathcal{A}$ be an algebra over a scheme $X$. For
 $ \Lambda \subset \Gamma(X,\mathcal{A})$ we say that $\Lambda$ \textit{generates} $\mathcal{A}$ if, for each open
 affine $U\subset X$ the $\mathcal{O}_X(U)$-algebra $\mathcal{A}(U)$ is generated by restriction of sections in
 $\Lambda$ to $U$.
\end{definition}

\subsection{Generation of trivial algebras}

Let $n \ge 2$ and $r \ge 1$. Consider the trivial \'etale algebra $\sh O_X^n$ over a scheme $X$. A global section of this algebra is equivalent to a
morphism $X \to \AA^n$, and an $r$-tuple $\Lambda$ of sections is a morphism $X \to \left(\AA^n\right)^r$. One might
hope that the subfunctor $\sh F \subseteq \left(\AA^n\right)^r$ of $r$-tuples of sections generating $\sh O_X^n$ as an
\'etale algebra is representable, and this turns out to be the case.

In order to define subschemes of $\left(\AA^n\right)^r$, it will be necessary to name coordinates: \[(x_{11}, x_{12},
\dots, x_{1n}, x_{21}, \dots, x_{2n}, \dots, x_{r1}, \dots, x_{rn}).\] It will also be useful to retain the grouping into
$n$-tuples, so we define $\vec x_l = (x_{l1}, x_{l2}, \dots, x_{ln})$.

\begin{notation}
 Fix $n$ and $r$ as above. For $(i,j) \in \{1, \dots, n\}^2$ with $i<j$, let $Z_{ij} \subset \left(\AA^n\right)^r$
 denote the closed subscheme given by the sum of the ideals $(x_{ki} - x_{kj})$ where $k$ varies from $1$ to $n$.

 Write $U(r; \AA^n)$, or $U(r)$ when $n$ is clear from the context, for the open subscheme of $\left( \AA^n \right)^r$
 given by 
 \[ U(r; \AA^n) = \left( \AA^n\right)^r - \bigcup_{i < j} Z_{ij} \]
\end{notation}

\begin{proposition}
 Let $n \ge 2$ and $r \ge 1$. The open subscheme $U(r; \AA^n ) \subset \left(\AA^n\right)^r$ represents the functor
 sending a scheme $X$ to $r$-tuples $(a_1, \dots, a_r)$ of global sections of $\sh O_X^n$ that generate it as an $\sh O_X$-algebra.
\end{proposition}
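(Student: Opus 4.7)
The plan is to compare the functors of points on both sides. A morphism $\phi \colon X \to (\AA^n)^r$ is precisely an $r$-tuple $(a_1, \dots, a_r)$ of global sections of $\sh O_X^n$, recovered by pulling back the coordinate sections $\vec x_l$. Such a $\phi$ factors through the open subscheme $U(r; \AA^n)$ if and only if the set-theoretic image $|\phi|(|X|)$ avoids every $Z_{ij}$. Unwinding the definition of $Z_{ij}$, this amounts to the condition that for every $x \in X$ and every pair $i < j$ some index $l$ satisfies $x_{li}(\phi)(x) \neq x_{lj}(\phi)(x)$ in the residue field $k(x)$.

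On the other side, Proposition \ref{pr:genIsLocal}(4) reduces generation to a fiberwise condition: the tuple $(a_1, \dots, a_r)$ generates $\sh O_X^n$ as an $\sh O_X$-algebra if and only if, for every $x \in X$, its image generates $k(x)^n$ as a $k(x)$-algebra. The two descriptions will coincide once the following field-theoretic claim is in place. For a field $K$, an $r$-tuple $(a_1, \dots, a_r) \in (K^n)^r$ generates $K^n$ as a $K$-algebra if and only if for each pair $i < j$ there is some $l$ with the $i$-th and $j$-th coordinates of $a_l$ distinct.

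I expect this claim to be the only substantive step. The ``only if'' direction is immediate: if every $a_l$ has equal $i$-th and $j$-th coordinates, then the generated subalgebra is trapped in the strict subalgebra $\{y \in K^n : y_i = y_j\}$. For the ``if'' direction, the hypothesis is exactly the injectivity of the map $\{1, \dots, n\} \to K^r$ given by $j \mapsto (a_1(j), \dots, a_r(j))$. A standard Lagrange-style interpolation argument, separating distinct images by products of linear functionals, then produces polynomials $P_{j_0} \in K[y_1, \dots, y_r]$ whose evaluation on the tuple recovers the idempotent supported at $j_0$, showing that the evaluation map $K[y_1,\dots,y_r] \to K^n$ is surjective.

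Assembling the two descriptions gives a natural bijection between $\Mor(X, U(r; \AA^n))$ and the set of generating $r$-tuples of $\sh O_X^n$, which is the content of the proposition. The main obstacle is just the field-theoretic interpolation claim, and even that reduces to standard linear algebra over $K$ once the correct combinatorial reformulation is in place.
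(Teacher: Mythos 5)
Your proposal is correct and follows essentially the same route as the paper: reduce both sides to a pointwise condition over residue fields (the paper phrases this as comparing two subsheaves of $(\AA^n)^r$ on local rings, then passing to the residue field), and then establish the key field-level equivalence --- that a tuple generates $K^n$ if and only if it separates every pair of coordinates --- by the same Lagrange-interpolation argument producing the idempotents. The only cosmetic difference is that you invoke the set-theoretic criterion for factoring through an open subscheme where the paper invokes the local-ring criterion for equality of subsheaves; the substantive content is identical.
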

\begin{proof}
 Temporarily, let $\sh F$ denote the subfunctor of $\left(\AA^n\right)^r$ defined by
 \[ \sh F(X) = \{ \Lambda \subseteq \sh (\Gamma(X, O_X^n)^r \mid \Lambda \text{ generates } \sh O_X^n \}. \]
 It follows from Proposition \ref{pr:genIsLocal} and Definition \ref{def:genOfAlg} that $\sh F$ is actually a sheaf on
 the big Zariski site.

 Both $U(r; \AA^n)$ and $\sh F$ are subsheaves of the sheaf represented by $(\AA^n)^r$, and therefore in order to show
 they agree, it suffices to show $U(r; \AA^n)(R) = \sh F(R)$ when $R$ is a local ring.

Let $R$ be a local ring. The set $U(r; \AA^n)(R)$ consists of certain $r$-tuples $(\vec a_1, \dots, \vec a_r )$ of elements of
 $R^n$. Letting $a_{ki}$ denote the $i$-th element of $\vec a_k$, then the $r$-tuples are those with the property that for each
 $i \neq j$, there exists some $k$ such that $a_{ki} - a_{kj} \in R^\times$. The proposition now follows from Lemma \ref{lem:localTrivialGeneration} below.
\end{proof}

\begin{lemma} \label{lem:localTrivialGeneration}
 Let $R$ be a local ring, with maximal ideal $\mathfrak m$. Let $(\vec a_1, \dots, \vec a_r)$ denote an $r$-tuple of
 elements in $R^n$, and let $a_{ki}$ denote the $i$-th element of $\vec a_k$. The following are equivalent:
 \begin{itemize}
 \item The set $\{\vec a_1, \dots, \vec a_r \}$ generates the (trivial) \'etale $R$-algebra $R^n$.
 \item For each pair $(i,j)$ satisfying $1 \le i < j \le n$, there is some $k \in \{1 ,\dots, r\}$ such that the
 element $a_{ki} - a_{kj}$ is a unit in $R^\times$.
 \end{itemize}
\end{lemma}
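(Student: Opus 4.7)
The plan is to reduce, via Proposition \ref{pr:genIsLocal}, to the case where $R$ is a field, and then prove the field version by an explicit Lagrange-style interpolation that recovers each standard idempotent of $k^n$ from the generators.

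First I would apply the equivalence (1)$\Leftrightarrow$(4) of Proposition \ref{pr:genIsLocal} to the local ring $R$, for which $\MaxSpec R$ contains only the maximal ideal $\mathfrak m$. This reduces the generation statement for $R^n$ over $R$ to the generation statement for $R^n \otimes_R R/\mathfrak m = k^n$ over the residue field $k = R/\mathfrak m$. Simultaneously, since $R \setminus R^\times = \mathfrak m$, the unit condition $a_{ki} - a_{kj} \in R^\times$ is equivalent, after passing to $k$, to the non-vanishing condition $\bar a_{ki} \neq \bar a_{kj}$. So the lemma reduces to the following field version: for $k$ a field, the set $\{\vec a_1, \dots, \vec a_r\}$ generates $k^n$ as a $k$-algebra if and only if the $n$ ``column'' tuples $\vec b_i := (a_{1i}, \dots, a_{ri}) \in k^r$, for $i = 1, \dots, n$, are pairwise distinct.

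For the field version, necessity is immediate: if $\vec b_i = \vec b_j$ for some $i \neq j$, then every polynomial in the $\vec a_l$ produces an element of $k^n$ whose $i$-th and $j$-th coordinates agree, so the idempotent $e_i$ lies outside the subalgebra generated by $\{\vec a_1, \dots, \vec a_r\}$. For sufficiency I would exhibit each $e_i$ explicitly: assuming the $\vec b_i$ are distinct, for each $i$ and each $j \neq i$ pick an index $k(j)$ with $a_{k(j)i} \neq a_{k(j)j}$, and form
\[
p_i(x_1, \dots, x_r) \; := \; \prod_{j \neq i} \frac{x_{k(j)} - a_{k(j)j}}{a_{k(j)i} - a_{k(j)j}}.
\]
A coordinate-by-coordinate check (the factor indexed by $j$ vanishes at $\vec b_j$, and every factor equals $1$ at $\vec b_i$) shows $p_i(\vec a_1, \dots, \vec a_r) = e_i$. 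Since $\{e_1, \dots, e_n\}$ is a $k$-basis of $k^n$, the subalgebra generated by the $\vec a_l$ is all of $k^n$.

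The main obstacle is essentially nothing: the reduction to the residue field is exactly what Proposition \ref{pr:genIsLocal} provides, and the Lagrange formula is routine. The non-trivial combinatorial content of the lemma is visible only after the split algebra $k^n$ is exposed, at which point the equivalence becomes the classical ``separation of points'' statement.
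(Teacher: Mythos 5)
Your proposal is correct and follows essentially the same route as the paper: reduce to the residue field via Proposition \ref{pr:genIsLocal}, observe that the unit condition becomes inequality of residues, and then in the field case prove necessity by noting that equal columns force every polynomial expression to land in a proper diagonal subalgebra, and sufficiency by Lagrange-style interpolation producing the idempotents $e_i$. The only cosmetic difference is that you write out the product formula for $p_i$ explicitly where the paper assembles it from the pairwise separating polynomials $p_{i,j}$.
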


\begin{proof}
 Each condition is equivalent to the same condition over $R/\mathfrak m$: the first by virtue of \ref{pr:genIsLocal},
 and the second by elementary algebra. Therefore, it suffices to prove this when $R$ is a field.

 Suppose $\{\vec a_1, \dots, \vec a_r \}$ generates $R^n$ as an algebra. Then, for any pair of indices $(i,j)$ with
 $1 \le i < j \le n$, it is possible to find a polynomial $p \in R[X_1, \dots, X_r]$ such that
 $p(a_{1i}, a_{2i}, \dots, a_{ri}) = 1$ and $p(a_{1j}, a_{2j}, \dots, a_{rj}) = 0$. In particular, there exists some
 $l$ such that $a_{li} \neq a_{lj}$.

 Conversely, suppose that for each pair $i<j$, we can find some $l$ such that $a_{li} \neq a_{lj}$.
 For each pair $i \neq j$, we can find a polynomial $p_{i,j} \in R[x_1, \dots, x_r]$ with the property that
 $p_{i,j}(a_{1i} , \dots, a_{ri}) = 1$ and $p_{i,j}(a_{1j}, \dots, a_{rj}) = 0$ by taking
 \[ p_{i,j}= (a_{ki}- a_{kj})^{-1}(x_k - a_{kj}) \]
 for instance. Consequently, we may produce a polynomial $p_i \in R[x_1, \dots, x_r]$ with the property that
 $p_i(a_{1j} , \dots, a_{rj}) = \delta_{i,j}$ (Kronecker delta). It follows that $\{\vec a_1, \dots, \vec a_r \}$
 generates the trivial algebra.
\end{proof}

\section{Classifying spaces} \label{sec:MainConst}

Fix $n \ge 2$ and $r \ge 1$. % In this subsection we work over a fixed field $k$.

% We tacitly change base from $\Spec \ZZ$ to
% $\Spec k$, so that $U(r;\AA^n)$ denotes the $k$-variety that should properly be written
% $U(r; \AA^n) \times_{\Spec \ZZ} \Spec k$. The reason we make this change of base is to use standard results about
% quotient varieties.

\begin{notation}
 For a given $k$-scheme $X$, a \textit{degree-$n$ \'etale algebra $\sh A$ with $r$ generating sections} denotes the
 data of a degree-$n$ \'etale algebra $\sh A$ over $X$, and an $r$-tuple of sections
 $(a_1, \dots, a_r) \in \Gamma( X, \sh A)$ that generate $\sh A$. These data will be briefly denoted
 $(\sh A , a_1, \dots, a_r)$. A \textit{morphism} $\psi: (\sh A, a_1, \dots, a_r) \to (\sh A', a_1' ,\dots, a_r')$ of
 such data consists of a map $\psi : \sh A \to \sh A'$ of \'etale algebras over $X$ such that $\psi(a_i) =a_i'$ for all
 $i \in \{1, \dots, r\}$. It is immediate that all morphisms are isomorphisms, and between any two objects, there is at
 most one isomorphism. The isomorphism class of $(\sh A, a_1, \dots, a_r)$ will be denoted $[\sh A, a_1, \dots, a_r]$.
\end{notation}

\begin{definition} \label{def:Fr}
 For a given $X$, there is a set, rather than a proper class, of isomorphism classes of degree-$n$ \'etale algebras
 over $X$, and so there is a set of isomorphism classes of degree-$n$ \'etale algebras with $r$ generating
 sections. Since generation is a local condition by Proposition \ref{pr:genIsLocal}, it follows that there is a functor
\begin{align*} \sh F(r;\AA^n) & : k\hyph\cat{Sch} \to \cat{Set}, \\ \sh F(r;\AA^n)(X) & = \{ [\sh A, a_1, \dots , a_r] \mid (\sh A, a_1,
\dots, a_r) \text{ is a degree-$n$} \\ & \text{ \'etale algebra over } X \text{ and $r$ generating sections} \} \end{align*}
\end{definition}

The purpose of this section is to produce a variety $B(r;\AA^n)$ representing the functor $\sh F(r;\AA^n)$ on the category of $k$-schemes.

\subsection{\texorpdfstring{Descent for $\sh F(r, \AA^n)$}{Descent for F(r;An)}}

\begin{proposition} \label{pr:FisSheaf}
 The functor $\sh F(r; \AA^n)$ is a sheaf on the big \'etale site of $\Spec k$.
\end{proposition}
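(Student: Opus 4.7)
The plan is to verify the two defining sheaf axioms, separatedness and gluing, for an arbitrary étale cover $\{U_i \to X\}_{i \in I}$, and to exploit the fact (noted in the notation just before Definition \ref{def:Fr}) that between any two objects $(\sh A, a_1, \dots, a_r)$ and $(\sh A', a_1', \dots, a_r')$ there is \emph{at most one} morphism. This rigidity turns what might otherwise be a stack-theoretic statement into an honest sheaf statement: all cocycle/coherence conditions on triple overlaps are automatic, and every descent datum is canonical.

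For separatedness, I would start with two classes $[\sh A, a_1, \dots, a_r]$ and $[\sh A', a_1', \dots, a_r']$ over $X$ whose pullbacks agree in $\sh F(r;\AA^n)(U_i)$ for each $i$. This means there exist isomorphisms $\psi_i : \sh A|_{U_i} \to \sh A'|_{U_i}$ sending $a_k|_{U_i}$ to $a_k'|_{U_i}$. By the uniqueness of such morphisms, the two restrictions of $\psi_i$ and $\psi_j$ to $U_i \times_X U_j$ must coincide, so the $\psi_i$ satisfy the descent cocycle automatically. Étale descent for morphisms of quasi-coherent sheaves (equivalently, the fact that $\underline{\Hom}_{\sh O_X}(\sh A, \sh A')$ is a Zariski, hence étale, sheaf on $X$) then assembles the $\psi_i$ into a global morphism $\psi : \sh A \to \sh A'$, which is an isomorphism because it is so étale-locally, and which preserves generators because this can be checked étale-locally on global sections.

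For gluing, I would begin with a family $(\sh A_i, a_{i,1}, \dots, a_{i,r})$ on the $U_i$ together with isomorphisms $\varphi_{ij}$ over $U_i \times_X U_j$ carrying $a_{i,k}$ to $a_{j,k}$. Again uniqueness of morphisms makes the cocycle condition on $U_i \times_X U_j \times_X U_k$ automatic. Now apply effective étale descent for quasi-coherent $\sh O_X$-modules (\cite[tag 023T]{deJongStacksProject2017}) to produce a quasi-coherent sheaf $\sh A$ on $X$ with $\sh A|_{U_i} \cong \sh A_i$. The commutative $\sh O$-algebra structure descends since multiplication and unit are $\sh O$-linear morphisms subject to the same descent argument. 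Finite presentation and flatness are étale-local on the base, as is separability (by Lemma \ref{lem.etalg}), so $\sh A$ is a degree-$n$ étale algebra. Likewise, each collection $\{a_{i,k}\}_i$ is a descent datum for the quasi-coherent sheaf $\sh A$ and hence yields a global section $a_k \in \Gamma(X,\sh A)$. Finally, generation of $\sh A$ by $(a_1,\dots,a_r)$ is an étale-local condition by Proposition \ref{pr:genIsLocal} applied after pulling back to affine étale opens (using that Zariski-local generation is detected at stalks, which may be checked after faithfully flat base change).

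The only genuine subtlety is verifying that the standard descent for quasi-coherent sheaves of $\sh O_X$-modules actually transports the algebra structure and preserves the étale-algebra property; both reduce to the fact that the relevant properties (commutative $\sh O$-algebra, flatness, finite presentation, separability, rank $n$) are stable under flat base change and étale-local on the base, so no new work is required beyond citing these. I expect that assembling all the descent data and checking it really does land in $\sh F(r;\AA^n)(X)$ will be the main bookkeeping step, but no step poses a substantive obstacle.
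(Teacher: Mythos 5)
Your proposal is correct and follows essentially the same route as the paper: both arguments hinge on the rigidity observation that there is at most one morphism between two étale algebras with chosen generating sections (making all cocycle conditions automatic), then invoke étale descent for quasi-coherent sheaves to glue objects and morphisms, and finish by noting that the algebra structure, the étale property, and the generation condition are all étale-local. The paper phrases this as identifying $\sh F(r;\AA^n)(X)$ with an equalizer rather than as separatedness plus gluing, but the content is identical.
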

In fact, it is a sheaf on the big fpqc site, but we will require only the \'etale descent condition.
\begin{proof}
 Suppose $X$ is a $k$-scheme and $\{ f_i : Y_i \to X \}_{i \in I}$ is an \'etale covering. We must identify $\sh F(r ;
 \AA^n)(X)$ with the equalizer in
 \[ E \to \prod_{i \in I} \sh F(r, \AA^n)(Y_i) \rightrightarrows \prod_{i,j \in I^2} \sh F(r; \AA^n)(Y_i \times_X
 Y_j). \]
 There is clearly a map $\sh F(r; \AA^n)(X) \to E$.

 Suppose we have an $i$-tuple of elements $([\sh A_i, \vec a(i) ])_{i \in I}$ in this equalizer. Choosing representatives in
 each case, we have degree-$n$ \'etale algebras $\sh A_i$ on each $Y_i$, along with chosen generating global
 sections. The equalizer condition is that there is an isomorphism over $Y_i \times_X Y_j$ of the form
 $\phi_{ij}: \pr_1^*(\sh A_i, \vec a(i)) \isomto \pr_2^*(\sh A_j, \vec{a}(j))$. The fact that there is at most one
 isomorphism between \'etale algebras with $r$ generating sections implies that we have a descent datum
 $( \sh A_i, \phi_{ij})$, and it is well known, \cite[Tag 023S]{deJongStacksProject2017}, that quasi-coherent sheaves satisfy
 \'etale descent. We therefore obtain a quasi-coherent sheaf of algebras $\sh A$ on $X$, and since $\sh A$ is an
 \'etale sheaf, the generating sections of each $\sh A_i$ glue to give generating sections of $\sh A$. This implies
 that the $\sh F(r; \AA^n)(X) \to E$ is surjective.

 To see it is injective, suppose $(\sh A, \vec{a})$ and $(\sh A', \vec{a'})$ become isomorphic when restricted to
 each $Y_i$. Then, since there can be at most a unique isomorphism between two \'etale algebras with generating
 sections, the local isomorphisms between $(\sh A, \vec{a})$ and $(\sh A', \vec{a'})$ assemble to give an
 isomorphism of descent data. Since there is an equivalence of categories between descent data and quasi-coherent
 sheaves, \cite[Tag 023S]{deJongStacksProject2017}, it follows that there is an isomorphism $\phi: \sh A \overset{\sim}{\to} \sh
 A'$. This isomorphism takes $\vec{a}$ to $\vec{a'}$, as required.
\end{proof}

\subsection{\texorpdfstring{Construction of $B(r; \AA^n)$}{Construction of B(r;An)}}

%We fix a degree $n$.%, and in the interest of brevity, we omit $\AA^n$ from the notation.

\begin{proposition} \label{pr:autisSn}
 Let $R$ be a nonzero connected ring. Then the
 automorphism group of the trivial \'etale $R$-algebra $R^n$ is the symmetric group $S_n$, acting on the terms.
\end{proposition}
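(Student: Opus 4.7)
The plan is to use the idempotent structure of $R^n$. The symmetric group $S_n$ acts on $R^n$ by permuting the factors, which gives an evident injective homomorphism $S_n \hookrightarrow \Aut_R(R^n)$. The nontrivial direction is surjectivity.

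First I would identify the idempotents of $R^n$. An element $e = (e_1, \dots, e_n) \in R^n$ is idempotent if and only if each $e_i \in R$ is. Since $R$ is nonzero and connected, the only idempotents of $R$ are $0$ and $1$, so the idempotents of $R^n$ are precisely the $2^n$ elements of $\{0,1\}^n$. Next I would identify which of these are primitive, in the sense of not being expressible as a sum of two nonzero orthogonal idempotents. Writing an idempotent with support $S \subseteq \{1,\dots,n\}$ as $\sum_{i \in S} \e_i$, where $\e_i$ is the standard basis vector, one sees that primitivity is equivalent to $|S| = 1$. Thus the primitive idempotents of $R^n$ are exactly $\e_1, \dots, \e_n$, and in particular there are exactly $n$ of them.

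Any $R$-algebra automorphism $\phi : R^n \to R^n$ sends idempotents to idempotents and preserves orthogonality as well as nonvanishing, so it permutes primitive idempotents. Hence there is a unique $\sigma \in S_n$ with $\phi(\e_i) = \e_{\sigma(i)}$ for all $i$. Now $R^n = \bigoplus_i R \cdot \e_i$ as an $R$-module, and $\phi$ is $R$-linear; so for any $x = \sum_i x_i \e_i$ one has
\[ \phi(x) = \sum_i x_i \phi(\e_i) = \sum_i x_i \e_{\sigma(i)}, \]
which is precisely the action of $\sigma$ permuting the coordinates of $x$. This shows $\phi$ lies in the image of $S_n$, completing the proof.

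The only step that uses a nontrivial hypothesis is the classification of idempotents, which genuinely needs $R$ connected (otherwise nontrivial idempotents of $R$ would produce many more idempotents in $R^n$ and destroy the uniqueness of the primitive set). Everything else is formal bookkeeping, so I do not anticipate a real obstacle; the main point is simply to observe that the $\e_i$ are characterized intrinsically as the primitive idempotents and that they $R$-linearly span $R^n$.
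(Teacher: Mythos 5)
Your proof is correct and follows essentially the same route as the paper: both arguments use connectedness of $R$ to pin down the idempotents of $R^n$ as $\{0,1\}^n$ and then show any automorphism must permute the standard basis idempotents $\e_1,\dots,\e_n$. The only cosmetic difference is that you characterize the $\e_i$ as the primitive idempotents, whereas the paper characterizes the set $\{\e_1,\dots,\e_n\}$ as the $n$ nonzero pairwise-orthogonal idempotents summing to $1$; both characterizations are automorphism-invariant and lead to the same conclusion.
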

\begin{proof}
 Since the equation $x^2-x=0$ has only the two solutions $1, 0$ in $R$, the condition $a^2=a$ for $a \in R^n$ implies that
 each component of $a$ is either $0$ or $1$. 

 Consider the elements
 \[ e_i = (0, \dots, 0, 1 , 0, \dots, 0) \in R^n. \]
 The set of these elements is determined by the conditions: $e_i^2 = e_i$, $e_i \neq 0$, $e_i e_j = 0$ for $i \neq j$
 and $\sum_{i=1}^n e_i = 1 \in R^n$.

 Therefore any automorphism of $R^n$ as an $R$-algebra permutes the $e_i$ and is determined by this permutation.
\end{proof}	

There is an action of the symmetric group $S_n$ on $\AA^n$, given by permuting the coordinates, and from there, there is
a diagonal action of $S_n$ on $\left( \AA^n \right)^r$, and the action restricts to the open subscheme $U(r; \AA^n)$.

\begin{proposition}
 The action of $S_n$ on $U(r; \AA^n)$ is scheme-theoretically free.
\end{proposition}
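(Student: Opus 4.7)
The plan is to recast scheme-theoretic freeness as the pairwise scheme-theoretic disjointness of the graphs of the $S_n$-action, then to compute each relevant fixed subscheme and observe it is empty. Recall that an action of a finite constant group $G$ on a scheme $X$ is scheme-theoretically free precisely when the morphism
\[
\Psi: G \times X \to X \times X, \qquad (g, x) \mapsto (gx, x),
\]
is a closed immersion. Since $G \times X$ decomposes as a disjoint union of $|G|$ copies of $X$ indexed by $\sigma \in G$, and since $\Psi$ restricts on the $\sigma$-th copy to the graph morphism $\Gamma_\sigma : X \hookrightarrow X \times X$ (which is always a closed immersion), the map $\Psi$ is a closed immersion if and only if $\Gamma_\sigma$ and $\Gamma_\tau$ are scheme-theoretically disjoint in $X \times X$ whenever $\sigma \neq \tau$. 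Their scheme-theoretic intersection is canonically the fixed subscheme $X^{\sigma^{-1}\tau}$ (embedded via the second projection), so it suffices to prove that for every nontrivial $\sigma \in S_n$, the fixed subscheme $U(r; \AA^n)^\sigma$ is empty.

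For the main computation I would first compute the fixed subscheme inside the ambient space $(\AA^n)^r$ and then intersect with the open set $U(r; \AA^n)$. Pick a nontrivial $\sigma \in S_n$ and choose indices $i \neq j$ lying in a common $\sigma$-orbit on $\{1, \dots, n\}$. A $T$-point $(a_{li})$ of $(\AA^n)^r$ fixed by the diagonal $S_n$-action must be constant along every $\sigma$-orbit in the second index, so in particular it satisfies $a_{li} = a_{lj}$ for all $l \in \{1, \dots, r\}$. These are exactly the defining equations of the closed subscheme $Z_{\min(i,j),\max(i,j)}$, so $\bigl((\AA^n)^r\bigr)^\sigma$ is contained scheme-theoretically in $Z_{ij}$. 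Since $U(r; \AA^n)$ was constructed as the complement of $\bigcup_{i<j} Z_{ij}$, intersecting with $U(r; \AA^n)$ yields the empty scheme, which is exactly the required conclusion.

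I do not anticipate any real obstacle, since the open subscheme $U(r; \AA^n)$ was engineered precisely to remove the loci where some nontrivial permutation can fix a point. The only mild care is needed in the initial reformulation, where one uses that $S_n$ is a finite constant group scheme so that $S_n \times X$ literally decomposes into components and the check of closed immersion reduces to pairwise disjointness of the graphs; and in observing that the containment of the fixed subscheme in $Z_{ij}$ holds at the level of ideals, not merely set-theoretically, which is evident from inspecting the defining equations. The remainder is combinatorics on orbits.
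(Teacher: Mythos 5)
Your proof is correct, and the essential computation---that a point fixed by a nontrivial $\sigma \in S_n$ must satisfy $a_{li} = a_{lj}$ for all $l$ and some pair $i \neq j$, hence lies in $Z_{ij}$ and therefore outside $U(r;\AA^n)$---is exactly the one the paper uses. Where you differ is in the formal packaging. The paper simply asserts that it suffices to check freeness on $K$-points for $K$ a separably closed field and then runs the combinatorial argument there; that reduction is standard (the fixed locus is a closed subscheme whose emptiness can be detected on geometric points) but is left unjustified. You instead avoid the reduction altogether: you recast freeness as the map $S_n \times U \to U \times U$ being a closed immersion, decompose it into graphs, identify the pairwise intersections with fixed subschemes, and show those are empty by an inclusion of ideals. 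This is more self-contained and proves the strongest form of the statement directly (emptiness of $U(r;\AA^n)^\sigma$ as a scheme, hence freeness on $T$-points for arbitrary $T$, not just fields). Two minor points of care: the parenthetical claim that a graph morphism $\Gamma_\sigma : X \to X \times X$ is ``always'' a closed immersion requires $X$ to be separated---true here since $U(r;\AA^n)$ is quasi-affine, but worth saying; and your equivalence ``$\Psi$ closed immersion $\iff$ graphs pairwise scheme-theoretically disjoint'' deserves the one-line justification via the Chinese remainder theorem for comaximal ideals, though in the end you prove the intersections are empty, which is stronger than comaximality and makes the point moot.
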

\begin{proof}
 It suffices to verify that the action is free on the sets $U(r;\AA^n)(K)$ where $K$ is a separably closed field over
 $k$. Here one is considering the diagonal $S_n$ action on $r$-tuples $(\vec a_1, \dots, \vec a_r)$ where each
 $\vec a_l \in K^n$ is a vector and such that for all indices $i \neq j$, there exists some $\vec a_l$ such that the
 $i$-th and $j$-th entries of $\vec a_l$ are different. The result follows.
\end{proof}

\begin{construction}
 There is a free diagonal action of $S_n$ on $U(r;\AA^n) \times \AA^n$, such that the projection
 $p: U(r;\AA^n) \times \AA^n \to U(r;\AA^n)$ is equivariant. The quotient schemes for these actions exist by reference
 to \cite[Expos\'e V, Proposition 1.8]{SGA1a} and \cite[Proposition 3.3.36]{Liu2002}. Write
 $q: E(r;\AA^n) \to B(r;\AA^n)$ for the induced map of quotient schemes. There is a commutative square
		\[
 \begin{tikzcd} U(r;\AA^n)\times \AA^n \arrow{d}{p} \arrow{r}{\pi'} & E(r;\AA^n) \arrow{d}{q} \\ U(r;\AA^n)
 \arrow{r}{\pi} & B(r;\AA^n)
		\end{tikzcd}
		\]
\end{construction}

\begin{proposition}
 In the notation above, the maps $\pi$ and $\pi'$ are finite.
\end{proposition}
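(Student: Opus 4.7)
My plan is to verify finiteness locally on the target, reducing in each case to the classical statement that a finitely generated algebra equipped with a finite group action is finite as a module over its invariant subring.

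First, I would produce an affine open cover of $B(r;\AA^n)$ whose preimages in $U(r;\AA^n)$ are $S_n$-stable affine opens. Such a cover exists because $U(r;\AA^n)$ is quasi-projective over $k$ (it is an open subscheme of $\AA^{nr}_k$) and $S_n$ is finite: for any point $x \in U(r;\AA^n)$, the finite orbit $S_n \cdot x$ is contained in some affine open $V$, and then $\bigcap_{\sigma \in S_n} \sigma(V)$ is an $S_n$-stable affine open containing $x$. By the construction of the quotient in \cite[Expos\'e V]{SGA1a}, for each such $\Spec A \subset U(r;\AA^n)$ the map $\pi$ restricts to $\Spec A \to \Spec A^{S_n}$, and these affine targets cover $B(r;\AA^n)$.

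Next, I would show each local restriction $\Spec A \to \Spec A^{S_n}$ is finite. Since $A$ is a finitely generated $k$-algebra, it is in particular a finitely generated $A^{S_n}$-algebra. Every $a \in A$ satisfies the monic polynomial
\[ \prod_{\sigma \in S_n}(X - \sigma \cdot a) \in A^{S_n}[X], \]
so $A$ is integral over $A^{S_n}$. A finitely generated algebra extension that is integral is finite as a module, so $A$ is a finite $A^{S_n}$-module and $\pi$ is finite.

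For $\pi'$ the argument is the same. The scheme $U(r;\AA^n) \times \AA^n$ is quasi-projective over $k$, and the diagonal $S_n$-action is free because it is free on the first factor; concretely the $S_n$-stable affine cover of $U(r;\AA^n)$ pulls back to an $S_n$-stable affine cover by sets of the form $\Spec A[y_1,\dots,y_n]$, on which the same integrality argument applies. The one substantive point in both halves is the existence of $S_n$-stable affine opens, which is a standard consequence of quasi-projectivity together with finiteness of the group; once this is in hand, finiteness drops out of the integral-plus-finitely-generated criterion.
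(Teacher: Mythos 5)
Your proof is correct and follows essentially the same route as the paper: cover $B(r;\AA^n)$ by quotients of $S_n$-stable affine opens, note the extension $A^{S_n}\subset A$ is of finite type, and conclude finiteness from integrality (the paper cites Atiyah--Macdonald for the integrality step, where you write out the monic polynomial $\prod_{\sigma\in S_n}(X-\sigma\cdot a)$ explicitly). The only stray remark is the appeal to freeness of the action in the $\pi'$ case, which is not needed for finiteness.
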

\begin{proof}
 We concentrate on the case of $\pi$, that of $\pi'$ is similar. The map $\pi$ is formed as follows (see \cite[Expos\'e
 V, \S 1]{SGA1a}): it is possible to cover $U(r; \AA^n)$ by $S_n$-invariant open affine subschemes
 $\Spec R \subseteq \AA^{nr}$. Then $\pi|_{\Spec R} : \Spec R \to \Spec R^{S_n}$, induced by the inclusion of $R^{S_n}$
 in $R$. The map $R^{S_n} \to R$ is of finite type, since $R$ is of finite type over $k$. By \cite[Exercise 5.12,
 p68]{Atiyah1969}, the extension $R^{S_n} \to R$ is integral, and being of finite type, it is finite.
\end{proof}

\begin{corollary}
 The maps $\pi: U(r; \AA^n) \to B(r; \AA^n)$ and $\pi': U(r; \AA^n) \times \AA^n \to E(r; \AA^n)$ are $S_n$-torsors.
\end{corollary}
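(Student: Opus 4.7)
The plan is to reduce to the affine case and invoke the standard theorem that a finite group acting freely on an affine scheme yields an étale Galois cover. To show that $\pi$ is an $S_n$-torsor (for the étale topology), I would verify the two standard properties: that $\pi$ is faithfully flat and étale, and that the shear morphism $s : S_n \times_k U(r;\AA^n) \to U(r;\AA^n) \times_{B(r;\AA^n)} U(r;\AA^n)$ sending $(g,u)$ to $(u, gu)$ is an isomorphism.

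Following the proof of the preceding finiteness proposition, one covers $U(r;\AA^n)$ by $S_n$-invariant affine open subschemes $\Spec R$, so that $\pi|_{\Spec R} : \Spec R \to \Spec R^{S_n}$ and the $\Spec R^{S_n}$ form an affine open cover of $B(r;\AA^n)$. By the previous proposition the $S_n$-action on $U(r;\AA^n)$ is scheme-theoretically free, and this restricts to a free action on each such $\Spec R$. Applying \cite[Expos\'e V, Proposition 1.8]{SGA1a} (or the discussion of free actions of finite groups there), the inclusion $R^{S_n} \hookrightarrow R$ is a finite étale Galois extension with Galois group $S_n$; equivalently, $\Spec R \to \Spec R^{S_n}$ is an étale $S_n$-torsor. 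Both properties in the definition of a torsor are local on the base, so these local torsor structures glue to show that $\pi$ is globally an $S_n$-torsor.

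The argument for $\pi'$ is entirely parallel. The diagonal $S_n$-action on $U(r;\AA^n) \times \AA^n$ is scheme-theoretically free because freeness on the first factor implies freeness on the product. The $S_n$-invariant affine cover $\{\Spec R\}$ of $U(r;\AA^n)$ yields an $S_n$-invariant affine cover $\{\Spec R \times \AA^n\} = \{\Spec R[y_1,\dots,y_n]\}$ of $U(r;\AA^n) \times \AA^n$, whose quotients cover $E(r;\AA^n)$, and the same citation applies.

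There is no substantial obstacle here: the essential point, the scheme-theoretic freeness of the action, has already been established, and after that the torsor property follows almost formally from the SGA 1 reference once one knows the quotient exists and is given affine-locally by $R^{S_n} \hookrightarrow R$. The only mild technical point is the existence of an $S_n$-invariant affine open cover of $U(r;\AA^n)$, which is standard since $U(r;\AA^n)$ is quasi-affine (in fact open in $\AA^{nr}$), so every finite subset, in particular every $S_n$-orbit, lies in an affine open, and intersecting translates produces an invariant affine open.
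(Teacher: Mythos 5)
Your proof is correct and takes essentially the same route as the paper: the paper treats the corollary as an immediate consequence of the two preceding propositions (scheme-theoretic freeness and finiteness of $\pi$, $\pi'$), since these are precisely the hypotheses under which \cite[Expos\'e V, Proposition 2.6]{SGA1a} identifies the quotient map by a finite free admissible action as a torsor, and your affine-local reduction and gluing is just an unwinding of that citation. The one small correction is that Proposition 1.8 of Expos\'e V only gives the existence and basic properties of the quotient; the statement that freeness makes $R^{S_n} \hookrightarrow R$ a Galois (\'etale) extension, i.e.\ the torsor property itself, is Proposition 2.6, which is the reference the paper actually uses.
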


That is, each satisfies the conditions of \cite[Expos\'e V, Proposition 2.6]{SGA1a}.

\begin{remark}\label{rem.descent}
 The sheaf of sections of the map $p: U(r;\AA^n) \times \AA^n \to U(r;\AA^n)$ is the trivial degree-$n$ \'etale algebra $\sh
 O_{U(r;\AA^n)}^n$ on $U(r;\AA^n)$. The action of $S_n$ on these sections is by algebra automorphisms, and so the sheaf of sections
 of the quotient map $q: E(r;\AA^n) \to B(r;\AA^n)$ is endowed with the structure of a degree-$n$ \'etale algebra $\sh E(r;\AA^n)$ on
 $B(r;\AA^n)$. We will often confuse the scheme $E(r;\AA^n)$ over $B(r;\AA^n)$ with the \'etale algebra of sections $\sh E(r;\AA^n)$.
% By \'etale descent, the sheaf of sections of $q$ corresponds to a degree-$n$ \'etale algebra $\mathcal{E}(r;\AA^n)$ on
% $B(r;\AA^n)$ equipped with $r$ generating sections $\{t_i\}_{i=1}^r$. We will freely confuse the map of $k$-varieties
% $q:E(r;\AA^n) \to B(r;\AA^n)$ and the sheaf of sections $\sh E(r;\AA^n)\to B(r;\AA^n)$.
 
 The map $p$ has $r$ canonical sections $\{s_j \}_{j=1}^r$ given as follows:
 \[ s_j(\vec x_1, \vec x_2, \dots, \vec x_r)=((\vec x_1, \vec x_2, \dots, \vec x_r,), \vec x_j). \]

 These sections are $S_n$-equivariant, and so induce sections $\{t_i: B(r;\AA^n) \to E(r;\AA^n)\}_{i=1}^r$ of the map $q$.
\end{remark}
	
\begin{remark} By reference to \cite[Lemma 05B5]{deJongStacksProject2017}, the quotient $k$-scheme $B(r;\AA^n)$ is
 smooth over $k$ since $U(r;\AA^n)$ is and $\pi$ is faithfully flat
 (\cite[Expos\'e V, Proposition 2.6]{SGA1a}) and locally finitely presented. Since $\pi$ is finite it is a proper map. When the base $k$ is a field, the
 variety $B(r;\AA^n)$ is a quasiprojective variety but not projective. Indeed, if $B(r;\AA^n)\to \Spec(k)$ were proper
 then $U(r;\AA^n)\to \Spec(k)$ would be proper too, but $U(r;\AA^n)$ is a nonempty open subvariety of affine space.
\end{remark}

\subsection{The functor represented by $B(r,\AA^n)$}

We now establish the identity of functors $B(r,\AA^n)(X) = \sh F(r;\AA^n)(X)$.
\begin{construction} \label{con:presh}
 There is a canonical element $[\mathcal{E}(r;\AA^n),t_1,\ldots,t_r]$ in $\sh F(r;\AA^n)(B(r;\AA^n))$, see \ref{rem.descent}.
 Therefore, there exists a natural transformation of presheaves of $k$-schemes $B(r, \AA^n)(\cdot) \to \sh F(r; \AA^n)(\cdot)$ given by sending a
 map $\phi : X \to B(r; \AA^n)$ to the pullback of the canonical element.
\end{construction}

\begin{lemma}\label{fieldmap}
 If $[A,s_1,\ldots,s_r]\in \sh F(r;\AA^n)(R)$ where $R$ is a strictly henselian local ring, then there exists a unique
 morphism of schemes $\phi:\Spec(R)\to B(r;\AA^n)$ such that
 \[ [A,s_1,\ldots,s_r]=[ \phi^*(\mathcal{E}(r;\AA^n)),\phi^*t_1,\ldots,\phi^* t_r ] \]
\end{lemma}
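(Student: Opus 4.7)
The plan is to exploit the strict henselization of $R$ to trivialize the \'etale algebra, translate the data into a point of $U(r;\AA^n)(R)$, and then descend along the $S_n$-torsor $\pi$ to obtain the desired map to $B(r;\AA^n)$.

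\textbf{Construction of $\phi$.} First, because $R$ is strictly henselian, every finite \'etale $R$-algebra is a product of copies of $R$ (see, e.g., \cite[tag 04GL]{deJongStacksProject2017}). Hence there exists an isomorphism $\alpha : A \xrightarrow{\sim} R^n$ of $R$-algebras. Under $\alpha$, the generating sections $(s_1,\ldots,s_r)$ are carried to an $r$-tuple $(\vec a_1,\ldots,\vec a_r) \in (R^n)^r$, i.e.\ a morphism $\Spec R \to (\AA^n)^r$. By Lemma \ref{lem:localTrivialGeneration} applied to the local ring $R$ (and using that generation is preserved under the algebra isomorphism $\alpha$), this morphism factors through the open subscheme $U(r;\AA^n)$, yielding $\widetilde\phi_\alpha : \Spec R \to U(r;\AA^n)$. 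Define $\phi := \pi \circ \widetilde\phi_\alpha$.

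\textbf{Independence of $\alpha$.} Since $R$ is a nonzero connected ring, Proposition \ref{pr:autisSn} shows $\Aut_R(R^n) = S_n$, so any other trivialization is of the form $\sigma \circ \alpha$ for some $\sigma \in S_n$. The resulting morphism to $U(r;\AA^n)$ is $\sigma \cdot \widetilde\phi_\alpha$, and since $\pi$ is the quotient by the $S_n$-action, $\phi = \pi \circ \widetilde\phi_\alpha$ does not depend on $\alpha$. A direct unwinding of Remark \ref{rem.descent} then shows $\phi^* \sh E(r;\AA^n) \cong A$ via $\alpha$ and carries $\phi^* t_i$ to $s_i$, so $\phi^*[\sh E(r;\AA^n), t_1,\ldots,t_r] = [A, s_1,\ldots, s_r]$.

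\textbf{Uniqueness.} Suppose $\phi' : \Spec R \to B(r;\AA^n)$ also pulls the canonical element back to $[A,s_1,\ldots,s_r]$. Pull back the $S_n$-torsor $\pi$ along $\phi'$; because $R$ is strictly henselian it has no nontrivial connected \'etale covers, so this torsor is trivial and admits a section, yielding a lift $\widetilde\phi' : \Spec R \to U(r;\AA^n)$. This lift corresponds to a trivialization $\alpha'$ of $\phi'^*\sh E(r;\AA^n) \cong A$ together with the images of $s_1,\ldots,s_r$ in $R^n$. The hypothesis that $[\phi'^*\sh E(r;\AA^n), \phi'^* t_i] = [A,s_i]$ means exactly that, after possibly composing with an element of $S_n$, $\widetilde\phi'$ agrees with a lift $\widetilde\phi$ of $\phi$ constructed as above. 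Hence $\phi' = \pi \circ \widetilde\phi' = \pi \circ \widetilde\phi = \phi$.

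\textbf{Expected main obstacle.} The only nontrivial point is bookkeeping the two layers of $S_n$-ambiguity, one coming from the choice of trivialization $\alpha$ of $A$ and the other from choosing a section of the pulled-back torsor in the uniqueness argument, and verifying that both are absorbed by passing to the quotient $B(r;\AA^n)$. Everything else is a routine consequence of Proposition \ref{pr:autisSn}, Lemma \ref{lem:localTrivialGeneration}, and the splitting of \'etale torsors over a strictly henselian local ring.
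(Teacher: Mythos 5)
Your proposal is correct and follows essentially the same route as the paper: trivialize $A$ over the strictly henselian local ring, use the resulting $R$-point of $U(r;\AA^n)$ to define $\phi = \pi\circ\widetilde\phi$, and prove uniqueness by lifting any competing $\phi'$ along the torsor $\pi$ and observing via Proposition \ref{pr:autisSn} that the two lifts differ by an element of $S_n$. Your explicit separation of the ``independence of $\alpha$'' step and your invocation of Lemma \ref{lem:localTrivialGeneration} to check the point lands in $U(r;\AA^n)$ are just slightly more detailed bookkeeping of what the paper leaves implicit.
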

\begin{proof}
 Since $R$ is a strictly henselian local ring, there exists an $R$-isomorphism $A\xrightarrow{\psi} R^n$ of algebras, by
 virtue of \cite[Proposition 1.4.4]{milneec}. Let
 $\{\psi(s_i) \} \subset R^n$ denote the corresponding sections of $R^n$.
 
 We thus obtain a map $\tilde{\phi}:\Spec(R)\to U(r;\AA^n)$ defined by giving the $R$-point
 $(\psi(s_1),\ldots,\psi(s_r))$. Post-composing this map with the projection $U(r;\AA^n) \to B(r;\AA^n)$, we obtain a morphism
 $\phi: \Spec(R)\to B(r;\AA^n)$. It is a tautology that $\phi^*(\mathcal{E}(r;\AA^n))=A$ and $\phi^*(t_i)=s_i$.

 It now behooves us to show that $\phi$ does not depend on the choices made in the construction.
 	
 Suppose $\phi': \Spec R \to B(r;\AA^n)$ is another morphism satisfying the conditions of the lemma. We may lift this
 $R$-point of $B(r;\AA^n)$ to an $R$-point $\tilde \phi' : \Spec R \to U(r;\AA^n)$, since $\pi$ is an \'etale
 covering, and therefore represents an epimorphism of \'etale sheaves \cite[Lemma 00WT]{deJongStacksProject2017}. By hypothesis we have
 \[ [A,s_1,\ldots,s_r]=[ \phi'^*(\mathcal{E}(r;\AA^n)),\phi'^*t_1,\ldots,\phi'^* t_r ]. \]
 Thus $\tilde \phi$ and $\tilde \phi'$ differ by an automorphism of $R^n$, i.e., by an element of $S_n$ since local
 rings are connected so \ref{pr:autisSn} applies. Therefore $\phi=\phi'$ as
 required.
\end{proof}

\begin{theorem} \label{pr:mainProp}
 If $X$ is a $k$-scheme, then the map \[ B(r, \AA^n)(X) \to \sh F(r; \AA^n)(X)\] is a bijection.
\end{theorem}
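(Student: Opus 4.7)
The plan is to observe that both sides are étale sheaves and that Lemma \ref{fieldmap} already provides the desired bijection pointwise. The target $\sh F(r;\AA^n)$ is an étale sheaf by Proposition \ref{pr:FisSheaf}, and the representable presheaf $B(r;\AA^n)(-)$ is even an fpqc sheaf, so the natural transformation of Construction \ref{con:presh} is a map of étale sheaves. Because strictly henselian local $k$-algebras form a conservative family of stalks for the big étale topos, such a map is an isomorphism if and only if it is a bijection on sections over every strictly henselian local ring, which is exactly what Lemma \ref{fieldmap} asserts.

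For a more hands-on derivation of surjectivity: given $[\sh A, a_1, \ldots, a_r] \in \sh F(r;\AA^n)(X)$, Lemma \ref{lem.etalg} produces an étale cover $\{U_i \to X\}$ trivializing $\sh A$. A choice of trivialization $\sh A|_{U_i} \isomto \sh O_{U_i}^n$ sends the generating sections to an element of $U(r;\AA^n)(U_i)$ (using the proposition representing tuples of generators of the trivial algebra), which composed with $\pi$ gives $\phi_i : U_i \to B(r;\AA^n)$. Two trivializations of $\sh A|_{U_i}$ differ, on any connected strictly henselian neighbourhood, by an element of $S_n$ by Proposition \ref{pr:autisSn}, and this discrepancy is killed by $\pi$; applying the same argument on overlaps $U_i \times_X U_j$ shows the $\phi_i$ agree there, so the étale sheaf property of the representable $B(r;\AA^n)(-)$ glues them to a morphism $\phi: X \to B(r;\AA^n)$. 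By construction $\phi$ pulls back the universal object $(\sh E(r;\AA^n), t_1, \ldots, t_r)$ to $(\sh A, a_1, \ldots, a_r)$.

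For injectivity, suppose $\phi, \phi': X \to B(r;\AA^n)$ pull the universal datum back to isomorphic objects of $\sh F(r;\AA^n)(X)$. Pulling back the étale cover $\pi$ along $\phi$ and $\phi'$ and passing to a common refinement $V \to X$ yields étale-local lifts $\tilde\phi, \tilde\phi': V \to U(r;\AA^n)$. The hypothesis forces $\tilde\phi$ and $\tilde\phi'$ to differ by an automorphism of the trivialized algebra on $V$, which on connected components is an element of $S_n$ by Proposition \ref{pr:autisSn}; composing with the $S_n$-invariant projection $\pi$ yields $\phi|_V = \phi'|_V$, and therefore $\phi = \phi'$ by the sheaf property.

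The main delicacy throughout is the descent bookkeeping: trivializations of $\sh A$ generally exist only after étale refinement and are only related by $S_n$-valued transition data after still further refinement, so the construction of $\phi$ must make essential use of the fact that $\pi$ is the quotient by $S_n$. This is what the stalk-theoretic argument in the first paragraph elegantly absorbs, since Lemma \ref{fieldmap} already packages existence, uniqueness, and the $S_n$-ambiguity into a single statement.
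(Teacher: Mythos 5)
Your first paragraph is precisely the paper's proof: both sides are sheaves on the big \'etale site ($B(r;\AA^n)$ because it is a scheme, $\sh F(r;\AA^n)$ by Proposition \ref{pr:FisSheaf}), so the comparison reduces to sections over strictly henselian local rings, where Lemma \ref{fieldmap} supplies the bijection. The additional ``hands-on'' paragraphs correctly unpack the same descent and $S_n$-ambiguity bookkeeping but add nothing beyond what the stalkwise reduction already gives.
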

\begin{proof}
 We note that $B(r, \AA^n)$ represents a sheaf on the big \'etale site of $\Spec k$, since it is a $k$-scheme. The
 presheaf $\sh F(r; \AA^n)$ is also an \'etale sheaf, by virtue of Proposition \ref{pr:FisSheaf}. It therefore suffices to
 prove that $B(r, \AA^n)(\Spec R) \to \sh F(r; \AA^n)(\Spec R)$ when $R$ is a strictly henselian local ring, but this
 is Lemma \ref{fieldmap}.
\end{proof}

% \begin{proposition} If $X$ is a variety over $k$ and if $[\mathcal{A},s_1,\ldots,s_r]\in \sh F(r;\AA^n)(X)$ then
% there exists a unique morphism of $k$-schemes $\phi:X\to B(r;\AA^n)$ such that
% \begin{equation}
% \label{eq:2}
% [\sh A,s_1,\ldots,s_r]=[ \phi^*(\mathcal{E}(r;\AA^n)),\phi^*t_1,\ldots,\phi^* t_r ].
% \end{equation}
% \end{proposition}
% \begin{proof} If $[\mathcal{A},s_1,\ldots,s_r]\in \sh F(r;\AA^n)(X)$ then there exists an \'etale cover
% $\mathcal{U}=\{\Spec(R_i)\to X \}$ affine schemes such that there exist isomorphisms
% $\psi_i : \mathcal{A}|_{\Spec R_i}\isomto R_i^n$. Write $U_i = \Spec R_i$.
		
% Then we obtain a morphism
% \[ U_i\xrightarrow{\phi_i=(s_1|_{U_i},\ldots,s_r|_{U_i})} U(r;\AA^n). \]
		
% We post-compose with $\pi$ to get maps $\phi_i: U_i\to B(r;\AA^n)$. The maps $\phi_i|_{U_i\times U_j}$ and
% $\phi_j|_{U_i\times U_j}$ agree on all geometric points by Lemma \ref{fieldmap} and thus are equal. By \'etale descent
% the maps $\phi_i$ define a map $\phi: X \to B(r;\AA^n)$ satisfying (\ref{eq:2}).
 
% If $\phi':X\to B(r;\AA^n)$ is a different map satisfying \eqref{eq:2}, then $\phi$ and $\phi'$ must differ on some geometric
% point. This not possible by Lemma \ref{fieldmap}.
% \end{proof}

\begin{example}
 Let us consider the toy example where $k$ is a field and $X = \Spec K$ for some field extension $K/k$, where $n \ge 2$, and where $r=1$. That is, we are considering
 \'etale algebras $A/K$ along with a chosen generating element $a \in A$. After base change to the separable closure,
 $K^s$, we obtain a $S_n$-equivariant isomorphism of $K^s$-algebras:
 \[\psi : A_{K^s} \isomto (K^s)^{\times n}.\] For the sake of the exposition, use $\psi$ to identify source and
 target. The element $a \in A$ yields a chosen generating element $\tilde a \in (K^s)^n$. The element $\tilde a$
 is a vector of $n$ pairwise distinct elements of $K^s$. The element $\tilde a$ is a $K^s$-point of $U(1; \AA^n)$. In
 general, this point is not defined over $K$, but its image in $B(1;\AA^n)$ is. 

 Since $U(1;\AA^n) \subseteq \A^n$, and $B(1;\AA^n)=U(1;\AA^n)/\Sigma_n$, the image of $\tilde a$ in $B(1;\AA^n)(K^s)$ may be presented as the
 elementary symmetric polynomials in the $a_i$. To say that the image of $\tilde a = (a_1, \dots, a_n)$ in $B(1;\AA^n)$ is
 defined over $K$ is to say that the coefficients of the polynomial $\prod_{i=1}^n (x-a_i)$ are defined in $K$.

 The variety $B(1;\AA^n)$ is the $k$-variety parametrizing degree-$n$ polynomials with distinct roots, i.e., with
 invertible discriminant.
\end{example}

\begin{example}
 To reduce the toy example even further, let us consider the case of $k=K$ a field of characteristic different from $2$, and $n=2$.

 The variety $B(1;\AA^2)$ may be presented as spectrum of the $C_2$-fixed subring of $k[x,y, (x-y)^{-1}]$ under the action
 interchanging $x$ and $y$. This is $k[(x+y), (x-y)^2, (x-y)^{-2}]$, although it is more elegant to present it after
 the change of coordinates $c_1= x+ y$ and $c_0=xy$:
 \[ B(1;\AA^2) = \Spec k [c_1, c_0, (c_1^2 - 4c_0)^{-1} ] \]

 A quadratic \'etale $k$-algebra equipped with the generating element $a$ corresponds to the point $(c_1, c_0) \in
 B(1;\AA^2)(k)$ where $a$ satisfies the minimal polynomial $a^2 - c_1 a + c_0 = 0$.

 For instance if $k=\RR$, the quadratic \'etale algebra of complex numbers $\CC$ with generator $s+ti$ over $\RR$ (here $t \neq 0$), corresponds to the point
 $(2s, s^2+t^2) \in B(1;\AA^2)(\RR)$, whereas $\RR \times \RR$, generated by $(s+t,s-t)$ over $\RR$ (again $t\neq 0$) , corresponds to the point $(2s,s^2-t^2)$. 
\end{example}

%While we have demonstrated that the functor $\sh F(r;\AA^n)$ agrees with $B(r;\AA^n)$ over $k$-varieties it is not hard to show that the same holds for any reduced $k$-scheme. Thus evaluating $\sh F(r;\AA^n)$ on any field $K$ (containing $k$) makes sense. 

\section{Stabilization in cohomology}
\label{sect.homotopy}

We might wish to use the schemes $B(r;\AA^n)$ to define cohomological invariants of \'etale algebras. The idea is the
following: suppose given such an algebra $\sh A$ on a $k$-scheme $X$, and suppose one can find generators $(a_1, \dots
a_r)$ for $\sh A$. Then one has a classifying map $\phi: X \to B(r;\AA^n)$, and one may apply a cohomology functor $E^*$, such as
Chow groups or algebraic $K$-theory, to obtain ``characteristic classes'' for $\sh A$-along-with-$(a_1, \dots, a_r)$, in
the form of $\phi^* : E^*(B(r;\AA^n)) \to E^*(X)$. The dependence on the specific generators chosen is a nuisance, and we see
in this section that this dependence goes away provided we are prepared to pass to a limit ``$B(\infty)$'' and assume
that the theory $E^*$ is $\Aone$-invariant, in that $E^*(X) \to E^*(X \times \Aone)$ is an isomorphism.

\begin{definition} \label{def:stabilizeMaps}
 There are \textit{stabilization} maps $U(r;\AA^n)\to U(r+1;\AA^n)$ obtained by augmenting an $r$-tuple of $n$-tuples
 by the $n$-tuple $(0,0,\dots,0)$. These stabilization maps are $S_n$-equivariant and therefore descend to maps
 $B(r;\AA^n) \to B(r+1;\AA^n)$. %(suppressing the $\AA^n$s from the notation).

 The stabilization maps defined above may be composed with one another, to yield maps $B(r;\AA^n) \to B(r';\AA^n)$ for all $r <
r'$. These maps will also be called \textit{stabilization} maps. 
\end{definition}
	
\begin{proposition}\label{prop.htylem}
 Let $X$ be a $k$-scheme. Suppose 
 \begin{equation*}
 [\mathcal{A},a_1,\ldots,a_r] \in \sh F(r;\AA^n)(X) \quad \text{and} \quad
[\mathcal{A}',a_1',\ldots,a_{r'}'] \in \mathcal{F}(r'; \AA^n)(X)
 \end{equation*}have the property that $\sh A \iso \sh A'$ as \'etale
 algebras. Let $\phi: X\to B(r;\AA^n)$ and $\phi':X\to B(r';\AA^n)$ be the corresponding classifying morphism. For $R= r
 + r'$, the composite 
maps $\tilde \phi: X \to B(r;\AA^n) \to B(R;\AA^n)$ and $\tilde \phi': X \to B(r';\AA^n) \to B(R;\AA^n)$ given
 by stabilization are na\"ively $\Aone$-homotopic.
\end{proposition}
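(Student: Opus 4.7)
By Theorem \ref{pr:mainProp}, giving a morphism $H\colon X\times\AA^1\to B(R;\AA^n)$ is the same as giving a degree-$n$ \'etale algebra $\sh B$ on $X\times\AA^1$ together with $R$ generating global sections $b_1,\ldots,b_R$; such an $H$ is the sought naive $\AA^1$-homotopy provided its restrictions at $t=0$ and $t=1$ recover $(\sh A,a_1,\ldots,a_r,0,\ldots,0)$ and $(\sh A',a_1',\ldots,a_{r'}',0,\ldots,0)$ respectively. Fix an isomorphism $\sh A\cong\sh A'$, transport each $a_j'$ to a section $\tilde a_j'\in\Gamma(X,\sh A)$, and take $\sh B=p_1^*\sh A$. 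The candidate formula I would try for the generating sections is
\[ b_i(t) \,=\, (1-t)\,a_i \,+\, t\,\tilde a_i' \,+\, t(1-t)\,\tilde a_{i-r}'\qquad(i=1,\ldots,R), \]
with the convention $a_k=0$ for $k\notin\{1,\ldots,r\}$ and $\tilde a_k'=0$ for $k\notin\{1,\ldots,r'\}$. The endpoint check is immediate: at $t=0$ the last two terms vanish and $b_i(0)=a_i$ (zero for $i>r$); at $t=1$ the first and third terms vanish and $b_i(1)=\tilde a_i'$ (zero for $i>r'$).

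The real work is checking that $(b_1,\ldots,b_R)$ generates $\sh B$ at every point of $X\times\AA^1$. By Proposition \ref{pr:genIsLocal} and Lemma \ref{lem:localTrivialGeneration}, after strictly henselizing and trivializing $\sh A$, it suffices to show that at every geometric point $(x,t_0)$ and for every pair $1\le I<J\le n$, some $b_k$ has unequal $I$-th and $J$-th components. Writing $\alpha_k,\beta_k$ for the $(I,J)$-differences of the components of $a_k$ and $\tilde a_k'$, the corresponding differences of the $b_k$ are
\[ D_k(t_0) \,=\, (1-t_0)\,\alpha_k \,+\, t_0\,\beta_k \,+\, t_0(1-t_0)\,\beta_{k-r}. \]
By hypothesis on the two generating sets, for each $(I,J)$ some $\alpha_k$ and some $\beta_k$ is nonzero, which immediately handles the fibers $t_0=0$ (where $D_k(0)=\alpha_k$) and $t_0=1$ (where $D_k(1)=\beta_k$).

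The main obstacle is the intermediate case $t_0\notin\{0,1\}$, where a pure linear interpolation $(1-t)a_i+t\tilde a_i'$ can vanish at isolated $t_0$; this is precisely the role of the correction term $t(1-t)\tilde a_{i-r}'$. The key observation is the tautology $t_0+(1-t_0)=1$, which forces both $t_0$ and $1-t_0$ to be units in the local ring whenever neither is zero in the residue field. Pick any $j_0\in\{1,\ldots,r'\}$ with $\beta_{j_0}\ne 0$ and let $j^*$ be the largest member of the arithmetic progression $j_0,j_0+r,j_0+2r,\ldots$ lying in $\{1,\ldots,r'\}$ with $\beta_{j^*}\ne 0$; by maximality $\beta_{j^*+r}=0$ (either because $j^*+r>r'$ or by the choice of $j^*$). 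Since $j^*+r\in\{r+1,\ldots,R\}$, we have $\alpha_{j^*+r}=0$, and hence
\[ D_{j^*+r}(t_0) \,=\, t_0(1-t_0)\,\beta_{j^*} \]
is nonzero. This completes the generation check and furnishes the required naive $\AA^1$-homotopy.
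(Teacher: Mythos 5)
Your proof is correct. It follows the same overall strategy as the paper's---build an explicit family of generating sections of $p_1^*\sh A$ over $X\times\AA^1$ interpolating between the two stabilized tuples, and verify generation pointwise via Proposition \ref{pr:genIsLocal} and Lemma \ref{lem:localTrivialGeneration}, exploiting that $t$ and $1-t$ cannot both vanish in a residue field---but the homotopy itself is different. The paper first pads both tuples to a common length $r=r'$ and uses the straight-line family $((1-t)a_1,\dots,(1-t)a_r,ta'_1,\dots,ta'_r)$; generation there is immediate, because on the locus where $1-t$ (resp.\ $t$) is a unit the first (resp.\ second) block is a unit rescaling of a generating set. The price is that at $t=1$ this lands on $(0,\dots,0,a'_1,\dots,a'_r)$ rather than on the stabilized tuple $(a'_1,\dots,a'_{r'},0,\dots,0)$, so the paper needs a second elementary homotopy to finish. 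Your quadratic correction term $t(1-t)\,\tilde a'_{i-r}$ lets you hit both stabilized tuples exactly with a single elementary homotopy, at the cost of a more delicate generation check: the arithmetic-progression/maximality argument producing an index $j^*+r$ with $\alpha_{j^*+r}=\beta_{j^*+r}=0$ and $\beta_{j^*}\neq 0$ is genuinely needed (the naive choice $k=j_0+r$ could give $D_{j_0+r}(t_0)=t_0\beta_{j_0+r}+t_0(1-t_0)\beta_{j_0}$, which can vanish at particular $t_0$), and you carry it out correctly, including the boundary case $j^*+r>r'$. Since the statement only asks for a naive homotopy, i.e., a finite chain of elementary ones, the paper's simpler two-step route suffices; your one-step construction is a legitimate and slightly sharper alternative.
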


An ``elementary $\Aone$-homotopy'' between maps $\phi, \phi': X \to B$ is a map $\Phi: X \times \Aone \to B$
specializing to $\phi$ at $0$ and $\phi'$ at $1$. Two maps $\phi, \phi': X \to B$ are ``naively $\Aone$-homotopic'' if
they may be joined by a finite sequence of elementary homotopies. Two naively homotopic maps between smooth finite-type
$k$-schemes are identified in the $\mathbb{A}^1$-homotopy theory of schemes of \cite{morel}, but they do not account for all
identifications in that theory.

\begin{proof} We may assume that $\sh A = \sh A'$. We may also assume that $r=r'$---if $r < r'$, then pad the vector
 $(a_1, \dots, a_r)$ with $0$s to produce a vector $(a_1, \dots, a_r, 0, \dots , 0)$ of length $r'$, and similarly in
 the other case.

 Write $t$ for the parameter of $\Aone$. Let $\sh A[t]$ denote the pull-back of $\sh A$ along the projection $X \times
 \Aone \to X$.

 Consider the sections $((1-t)a_1, \dots, (1-t)a_r, ta'_1, \dots, ta'_r)$ of $\sh A[t]$. Since either $t$ or $(1-t)$ is
 a unit at all local rings of points $\AA^1$, by appeal to Proposition \ref{pr:genIsLocal} and consideration of the
 restrictions to $X \times (\Aone -\{0\})$ and $X \times (\Aone - \{1\})$, we see that $((1-t)a_1,
 \dots, (1-t)a_r, ta'_1, \dots, ta'_r)$ furnish a set of generators for $\sh A[t]$. At $t=0$, they specialize to $(a_1,
 \dots , a_r, 0, \dots, 0)$, viz., the generators specified by the stabilized map $\phi: X \to B(r;\AA^n) \to B(2r;\AA^n)$. At
 $t=1$, they specialize to $(0, \dots, 0, a'_1, \dots, a'_r)$, which is not precisely the list of generators specified
 by $\phi': X \to B(r;\AA^n) \to B(2r;\AA^n)$, but may be brought to this form by another elementary $\Aone$-homotopy.
\end{proof}

\begin{corollary}\label{cor.htylem} Let $\phi$ and $\phi'$ be as in the previous proposition. If $E^*$ denotes any $\Aone$-invariant
 cohomology theory, then $E^*(\tilde \phi)=E^*(\tilde \phi')$.
\end{corollary}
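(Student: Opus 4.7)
The plan is to reduce the corollary directly to the naive $\Aone$-homotopy statement of Proposition \ref{prop.htylem} and then apply $\Aone$-invariance of $E^*$ in the standard way. The proof should be essentially immediate, so the main task is to spell out what it means for an elementary $\Aone$-homotopy to induce equal maps in cohomology.

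First I would invoke Proposition \ref{prop.htylem} to obtain a finite chain of elementary $\Aone$-homotopies joining $\tilde \phi$ to $\tilde \phi'$. By induction on the length of this chain, it suffices to treat the case of a single elementary homotopy, that is, to show that if $\Phi : X \times \Aone \to B(R;\AA^n)$ restricts to $\psi_0 := \Phi \circ i_0$ at $t=0$ and $\psi_1 := \Phi \circ i_1$ at $t=1$, where $i_0, i_1 : X \to X \times \Aone$ are the two closed immersions, then $E^*(\psi_0) = E^*(\psi_1)$.

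Next I would argue as follows. Let $p : X \times \Aone \to X$ denote the projection. Since $E^*$ is $\Aone$-invariant by hypothesis, the map $E^*(p) : E^*(X) \to E^*(X \times \Aone)$ is an isomorphism. Because both $i_0$ and $i_1$ are sections of $p$, the compositions $E^*(i_0) \circ E^*(p)$ and $E^*(i_1) \circ E^*(p)$ both equal the identity on $E^*(X)$. Since $E^*(p)$ is invertible, we conclude $E^*(i_0) = E^*(i_1) = E^*(p)^{-1}$. Consequently
\[ E^*(\psi_0) = E^*(\Phi) \circ E^*(i_0) = E^*(\Phi) \circ E^*(i_1) = E^*(\psi_1), \]
which completes the inductive step and hence the proof.

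There is no genuine obstacle here; the work has already been done in Proposition \ref{prop.htylem}. The only point worth being careful about is the direction of composition in $E^*$ (contravariant vs.\ covariant), but the argument is identical in either case because it only uses that $i_0$ and $i_1$ are sections of a map under which $E^*$ becomes an isomorphism.
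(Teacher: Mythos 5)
Your argument is correct and is precisely the standard reasoning the paper leaves implicit (the corollary is stated without proof, being regarded as immediate from Proposition \ref{prop.htylem} and the definition of $\Aone$-invariance). Spelling out that $E^*(i_0)=E^*(i_1)=E^*(p)^{-1}$ and inducting over the chain of elementary homotopies is exactly the intended content.
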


% \begin{remark}
% Consider the case $X=k=\QQ$, $n=2$, i.e., the case of quadratic \'etale $\QQ$-algebras. One such algebra is given by
% $\QQ(\sqrt 2)$, generated by the distinguished element $\sqrt 2$. This corresponds to the $\QQ$-point $(0, -2) \in
% B(1)$---using the presentation of \ref{}
% \end{remark}

\section{The motivic cohomology of the spaces $B(r; \A^2)$} \label{sec:MotCoh}

For this section, let $k$ denote a fixed field of characteristic different from $2$. The motivic cohomology of the
spaces $B(r; \AA^2)$ has already been calculated in \cite{dugger_hopf_2007}.

\subsection{Change of coordinates}

\begin{lemma}\label{lem.weakequiv}
 There is an equivariant isomorphism $ U(r;\AA^2) \cong \mathbb{A}^r\setminus \{0\} \times \mathbb{A}^r$, where $C_2$ acts as
 multiplication by $-1$ on first factor $\mathbb{A}^r\setminus \{0\}$ and trivially on the second factor
 $\mathbb{A}^r$. Taking quotient by $C_2$-action yields
 $B(r;\AA^2) \cong (\mathbb{A}^r\setminus \{0\})/C_2\times \mathbb{A}^r.$
\end{lemma}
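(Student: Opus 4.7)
The plan is to exhibit the equivariant isomorphism via an explicit linear change of coordinates, using that $\Char k \neq 2$ is assumed throughout this section.

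Recall that $(\AA^2)^r$ has coordinates $x_{k1}, x_{k2}$ for $k = 1, \dots, r$, with the generator $\sigma$ of $C_2 = S_2$ acting by $x_{k1} \leftrightarrow x_{k2}$. Since $2$ is invertible in $k$, I would introduce new coordinates
\[ u_k = x_{k1} - x_{k2}, \qquad v_k = x_{k1} + x_{k2}, \qquad k = 1, \dots, r. \]
This is a $k$-linear automorphism of $(\AA^2)^r \cong \AA^{2r}$, under which $\sigma$ acts by $u_k \mapsto -u_k$ and $v_k \mapsto v_k$; that is, the action decomposes as $(-1)$ on the $u$-coordinates and the trivial action on the $v$-coordinates.

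Next I would identify $U(r;\AA^2)$ inside these new coordinates. By the construction preceding the lemma, $U(r;\AA^2) = (\AA^2)^r \setminus Z_{12}$, where $Z_{12}$ is cut out by the ideal $(x_{k1} - x_{k2})_{k=1}^r = (u_k)_{k=1}^r$. Hence $Z_{12}$ is precisely the closed subscheme $\{u_1 = \dots = u_r = 0\}$, so in $(u,v)$-coordinates
\[ U(r;\AA^2) \cong \bigl(\AA^r \setminus \{0\}\bigr) \times \AA^r, \]
with the first factor carrying the $u$-coordinates and the second the $v$-coordinates. The $C_2$-action described above is exactly multiplication by $-1$ on the first factor and trivial on the second, which gives the first assertion.

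For the quotient statement, I would argue that since $C_2$ acts trivially on the $\AA^r$ factor and the action on $\AA^r \setminus \{0\}$ is free (as $\Char k \neq 2$), the quotient commutes with the product: $\bigl((\AA^r \setminus \{0\}) \times \AA^r\bigr)/C_2 \cong \bigl((\AA^r \setminus \{0\})/C_2\bigr) \times \AA^r$. This can be verified directly on affine opens, where invariants in $A \otimes_k k[v_1,\dots,v_r]$ under an action trivial on the second factor are $A^{C_2} \otimes_k k[v_1,\dots,v_r]$. Combined with the identification of $B(r;\AA^2)$ as $U(r;\AA^2)/C_2$, this yields the claimed formula. No serious obstacle is anticipated; the only subtlety is invoking $\Char k \neq 2$ both for the linear change of coordinates and for the freeness of the sign action on $\AA^r \setminus \{0\}$.
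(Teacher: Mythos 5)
Your proof is correct and follows essentially the same route as the paper: the linear change of coordinates $u_k = x_{k1}-x_{k2}$, $v_k = x_{k1}+x_{k2}$ (valid since $\Char k \neq 2$), the identification of $U(r;\AA^2)$ as the complement of $\{u=0\}$, and passage to the quotient. The only difference is that you spell out why the quotient commutes with the product over the trivially-acted-on factor, which the paper takes for granted; that extra detail is harmless and correct.
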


\begin{proof}
 By means of the change of coordinates
 \[ x_i - y_i = z_i , \quad x_i + y_i = w_i \]
 we see that $U(r; \AA^2) \iso (\A^r \sm \{0\}) \times \A^r$. Moreover, the action of $C_2$ on $U(r;\AA^2)$ is given by
 $z_i \mapsto -z_i$ and $w_i \mapsto w_i$. We therefore obtain an isomorphism
 $B(r; \AA^2)= U(r;\AA^2)/ C_2 \iso (\A^r \sm \{0\}) / C_2 \times \A^r$. Write $V(r;\AA^2)$ for $\A^r \sm \{0\}/C_2$. It is immediate that
 $B(r;\AA^2) \iso V(r;\AA^2) \times \AA^r$, and so there is a split inclusion $V(r;\AA^2) \to B(r;\AA^2)$ which is moreover an
 $\Aone$-equivalence.
\end{proof}

\subsection{The deleted quadric presentation}

\begin{definition}
 Endow $\PP^{2r-1}$ with the projective coordinates $a_1, \dots, a_r$, $b_1 , \dots, b_r$. Let $Q_{2r-2}$ denote the
 closed subvariety given by the vanishing of $\sum_{i=1}^r a_i b_i$, and let $DQ_{2r-1}$ denote the open complement
 $\PP^{2r-1} \setminus Q_{2r-2}$.
\end{definition}

The main computation of \cite{dugger_hopf_2007} is a calculation of the modulo-$2$ motivic cohomology of $DQ_{2r-1}$,
and of a family of related spaces $DQ_{2r}$. Our reference for the motivic cohomology of $k$-varieties
is \cite{MazzaLecturenotesmotivic2006}. For a given abelian group $A$, either $\ZZ$ or $\FF_2$ in this paper, and a
given variety $X$, the motivic cohomology
$\Hoh^{*,*}(X; A)$ is a bigraded algebra over the cohomology of the ground field, $\Spec k$.

Denote the modulo-$2$ motivic cohomology of $\Spec k$ by $\MM_2$. This is a
bigraded ring, 
\[\MM_2 = \bigoplus_{i,n} \MM_2^{n,i},\] nonzero only in degrees $0 \le n \le i$. There are two notable classes,
$\rho \in \MM^{1,1}_2$, the reduction modulo $2$ of $\{-1\} \in K^M_1(k) = \Hoh^{1,1}(\Spec k, \ZZ)$, and
$\tau \in \MM^{0,1}_2$, corresponding to the identity $(-1)^2 = 1$. If $-1$ is a square in $k$, then $\rho = 0$, but
$\tau$ is always a nonzero class.

\begin{proposition}[Dugger--Isaksen, \cite{dugger_hopf_2007} Theorem 4.9] \label{pr:DIone}
 There is an isomorphism of graded rings
 \[ \Hoh^{\ast, \ast} (DQ_{2r-1}; \FF_2) \iso \frac{\MM_2 [a, b]}{ (a^2 - \rho a - \tau b, b^{r})} \]
 where $|a| = (1,1)$ and $|b| = (2,1)$.

 Moreover, the inclusion $DQ_{2r-1} \to DQ_{2r+1}$ given by $a_{r+1} = b_{r+1} = 0$ induces the map $\Hoh^{\ast, \ast}
 (DQ_{2r+1}; \FF_2) \to \Hoh^{\ast, \ast} (DQ_{2r-1}; \FF_2)$ sending $a$ to $a$ and $b$ to $b$.
\end{proposition}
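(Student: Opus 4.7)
My plan is to proceed by induction on $r$. The base case $r=1$ is immediate: $DQ_1 = \PP^1 \setminus \{[1:0],[0:1]\} \cong \Gm$, whose mod-$2$ motivic cohomology is the well-known $\MM_2[a]/(a^2 - \rho a)$. Since $b^1 = 0$ forces $b=0$ in the claimed presentation, the $-\tau b$ summand vanishes and the two rings coincide.

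For the inductive step I would use the stabilization embedding $j: DQ_{2r-1} \hookrightarrow DQ_{2r+1}$, which is a regular closed embedding of codimension $2$ cut out by $a_{r+1} = b_{r+1} = 0$. The associated Gysin long exact sequence is
\[ \cdots \to \Hoh^{p-4,q-2}(DQ_{2r-1}; \FF_2) \xrightarrow{j_*} \Hoh^{p,q}(DQ_{2r+1}; \FF_2) \to \Hoh^{p,q}(W; \FF_2) \xrightarrow{\partial} \Hoh^{p-3,q-1}(DQ_{2r-1}; \FF_2) \to \cdots, \]
where $W = DQ_{2r+1} \setminus DQ_{2r-1}$ is the open locus $(a_{r+1},b_{r+1}) \neq (0,0)$. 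To control $\Hoh^{*,*}(W)$ I would exploit the morphism $W \to \PP^1$ sending $[\underline{a}:\underline{b}]$ to $[a_{r+1}:b_{r+1}]$ and use the standard cover of $\PP^1$ by two affine lines: on either chart the preimage is an open subscheme of $\A^{2r+1}$ obtained by removing a smooth hypersurface (a translate of a hyperplane after normalising the chosen coordinate), and is $\A^1$-equivalent to $\Gm$ times a piece handled by the inductive hypothesis. Mayer--Vietoris for this cover, followed by splicing into the Gysin sequence, gives the additive presentation of $\Hoh^{*,*}(DQ_{2r+1};\FF_2)$ and, by naturality, the stated behaviour of the stabilization map on generators.

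To upgrade from the additive to the multiplicative structure I would interpret $DQ_{2r-1}$ as a finite-dimensional Totaro-style approximation to the classifying space $B\mu_2$: the $C_2$-quotient presentation in Lemma~\ref{lem.weakequiv} identifies $B(r;\AA^2)$ as such an approximation, and $DQ_{2r-1}$ plays an analogous role via its own double-cover structure. Under this interpretation, $a \in \Hoh^{1,1}$ is the mod-$2$ first motivic Chern class of the tautological line bundle, $b \in \Hoh^{2,1}$ is the associated Bockstein-type class, and the relation $a^2 = \rho a + \tau b$ is the universal relation from Voevodsky's computation $\Hoh^{*,*}(B\mu_2;\FF_2) = \MM_2[a,b]/(a^2 - \rho a - \tau b)$, pulled back to $DQ_{2r-1}$ along the classifying map. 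The truncation $b^r = 0$ reflects the level of the approximation and is forced by the Gysin sequence once one tracks the image of $j_*(1) \in \Hoh^{4,2}(DQ_{2r+1})$ and identifies it with a scalar multiple of $b^r$.

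The main obstacle, I expect, is the delicate bookkeeping of $\rho$- and $\tau$-multiples inside the Gysin sequence: establishing the precise relation $a^2 = \rho a + \tau b$ (rather than merely its additive shadow, which over fields where $-1$ is a square degenerates to $a^2 = \tau b$) requires pinning down the classes $a$ and $b$ simultaneously in both $\Hoh^{*,*}(DQ_{2r-1})$ and $\Hoh^{*,*}(W)$ and tracking their interaction. The cleanest bypass---and the route I would ultimately take---is to bootstrap from Voevodsky's computation of $\Hoh^{*,*}(B\mu_2;\FF_2)$ via the approximation interpretation, reducing the task to showing that the classifying map $DQ_{2r-1} \to B\mu_2$ induces a surjection in motivic cohomology with kernel exactly the ideal generated by $b^r$.
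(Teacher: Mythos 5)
First, a point of order: the paper does not prove this statement at all---it is imported verbatim as \cite{dugger_hopf_2007}, Theorem 4.9, and the surrounding text only explains how to transport it to $B(r;\AA^2)$ via the equivalences of Lemma \ref{lem.weakequiv} and Proposition \ref{pr:Jouan}. So the relevant comparison is with Dugger--Isaksen's own argument. Your overall architecture (a localization sequence for the additive structure, then comparison with Voevodsky's $\Hoh^{\ast,\ast}(B\mu_2;\FF_2)\iso\MM_2[a,b]/(a^2-\rho a-\tau b)$ for the ring structure) is the right genre, but your chosen decomposition is not theirs: they run the Gysin sequence for the split quadric $Q_{2r-2}\subset\PP^{2r-1}$ with open complement $DQ_{2r-1}$, so that both the closed stratum and the ambient space are cellular with known free $\MM_2$-module cohomology and the only work is in the pushforward. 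A still cheaper route is available through Proposition \ref{pr:Jouan}: $(\AA^r\sm\{0\})/C_2\to\PP^{r-1}$ is the complement of the zero section of $\sh O(-2)$, whose mod-$2$ Euler class vanishes, so the Gysin sequence splits into short exact sequences $0\to\Hoh^{p,q}(\PP^{r-1})\to\Hoh^{p,q}\to\Hoh^{p-1,q-1}(\PP^{r-1})\to 0$; this yields the additive answer and $b^r=0$ (pulled back from $c_1(\sh O(1))^r=0$) with no induction, leaving only the single relation $a^2=\rho a+\tau b$ to be pinned down.

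The genuine gaps in your sketch are concentrated in the open stratum $W=DQ_{2r+1}\sm DQ_{2r-1}$ and in the degree bookkeeping. The map $W\to\PP^1$ is not a fibration: the fiber over $[1:\lambda]$ is the complement in $\AA^{2r}$ of the affine quadric $\sum_{i\le r}a_ib_i=-\lambda$, which degenerates to the affine cone at $\lambda=0$. Each of your two charts is indeed $\Aone$-equivalent to $\Gm$ (not to ``$\Gm$ times a piece handled by the inductive hypothesis''---no induction enters on a single chart), but the Mayer--Vietoris overlap is $\AA^{2r}\times\AA^1$ minus the two meeting hypersurfaces $u=0$ and $u=\sum_{i\le r}a_ib_i$, and computing its cohomology is essentially as hard as computing $\Hoh^{\ast,\ast}(DQ_{2r-1})$ itself; as written the step is circular. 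Two further errors: the boundary map in your localization sequence must land in $\Hoh^{p-3,q-2}$, not $\Hoh^{p-3,q-1}$ (weights drop by the full codimension, and weight bookkeeping is exactly what distinguishes $\tau$-multiples here); and $j_\ast(1)$ lives in $\Hoh^{4,2}(DQ_{2r+1})$, where by Proposition \ref{pr:DItwo} it is the class $\tilde b^2$, so it cannot be ``a scalar multiple of $b^r$'' except when $r=2$---the truncation is not detected by tracking $j_\ast(1)$. Finally, your proposed bypass, ``show the classifying map $DQ_{2r-1}\to B\mu_2$ is surjective on cohomology with kernel exactly $(b^r)$,'' is a restatement of the theorem rather than a reduction of it: identifying the kernel requires precisely the additive computation that the earlier steps were meant to supply.
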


This proposition subsumes two other notable calculations of invariants. In the first place, owing to the
Beilinson--Lichtenbaum conjecture \cite{voe}, it subsumes the calculation of $\Hoh^\ast_{\et} (DQ_{2r-1};\FF_2)$. For instance,
if $k$ is algebraically closed, then $\MM_2 = \FF[\tau]$, and one deduces that $\Hoh^\ast_{\et}( DQ_{2r-1}; \FF_2) \iso
\FF_2[a, b] /( a^2 -b , b^{r}) = \FF_2[a]/(a^{2r})$.

In the second, since $\Hoh^{2n,n}(\cdot; \FF_2)$ is identified with $\CH^n(\cdot) \tensor_\ZZ \FF_2$, the calculation of
the proposition subsumes that of the Chow groups modulo $2$. In fact, the extension problems that prevented Dugger and
Isaksen from calculating $\Hoh^{\ast, \ast}(DQ_{2r-1} ; \ZZ)$ do not arise in this range, and by reference to the
appendix of \cite{dugger_hopf_2007}, which in turn refers to \cite{KarpenkoChowgroupsprojective1990}, one can calculate
the integral Chow rings. This is done in the first two paragraphs of the proof of \cite[Theorem 4.9]{dugger_hopf_2007}.

\begin{proposition} \label{pr:DItwo}
 One may present
 \[ \CH^\ast(DQ_{2r-1}) = \frac{\ZZ[\tilde b]}{(2\tilde b, \tilde b^{r})}, \quad |b| = 1.\]
 As before, the map $DQ_{2r-1} \to DQ_{2r+1}$ given by adding $0$s induces the map $b \mapsto b$ on Chow
 rings. Moreover 
 $\CH^\ast(DQ_{2r-1}) \tensor_\ZZ \FF_2$ can
 be identified with the subring of $ \Hoh^{\ast, \ast} (DQ_{2r-1}; \FF_2)$ generated by $b$.
\end{proposition}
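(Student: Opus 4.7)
The plan is to apply the localization sequence in Chow groups to the open immersion $j\colon DQ_{2r-1} \hookrightarrow \PP^{2r-1}$ with closed complement $i\colon Q_{2r-2} \hookrightarrow \PP^{2r-1}$, and then transport the resulting presentation into mod-$2$ motivic cohomology. The ambient Chow ring is $\CH^*(\PP^{2r-1}) = \ZZ[h]/(h^{2r})$ with $h$ the hyperplane class, and the sequence reads
\[ \CH^{*-1}(Q_{2r-2}) \xrightarrow{i_*} \CH^*(\PP^{2r-1}) \xrightarrow{j^*} \CH^*(DQ_{2r-1}) \to 0. \]
Thus $\tilde b := j^* h$ will generate the quotient, and everything reduces to computing $\mathrm{im}(i_*)$.

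For this computation, the substantive input is the Chow ring of the smooth split quadric $Q_{2r-2}$, worked out by Karpenko \cite{KarpenkoChowgroupsprojective1990}, which I would cite together with the sketch in \cite[proof of Theorem 4.9]{dugger_hopf_2007}. Two classes in particular determine $\mathrm{im}(i_*)$: the fundamental class $[Q_{2r-2}] \in \CH^0(Q_{2r-2})$ pushes forward to $2h$, since $Q$ has degree $2$ in $\PP^{2r-1}$; and a maximal isotropic ruling $\Lambda \iso \PP^{r-1}$ inside the hyperbolic quadric, viewed as a linear subvariety of $\PP^{2r-1}$ of codimension $r$, pushes forward to $h^r$. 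Combined with the projection formula, Karpenko's generators show $\mathrm{im}(i_*) = (2h, h^r)$; since $h^r$ divides $h^{2r}$ the relation $h^{2r}=0$ is automatic, yielding $\CH^*(DQ_{2r-1}) \iso \ZZ[\tilde b]/(2\tilde b, \tilde b^r)$. Functoriality under the stabilization $DQ_{2r-1} \hookrightarrow DQ_{2r+1}$ is immediate: it extends to the linear embedding $\PP^{2r-1} \hookrightarrow \PP^{2r+1}$ given by $a_{r+1}=b_{r+1}=0$, along which the hyperplane class restricts to the hyperplane class, so $\tilde b \mapsto \tilde b$.

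For the final comparison, I would invoke Voevodsky's identification $\CH^n(X) = \Hoh^{2n,n}(X; \ZZ)$ together with the Bockstein long exact sequence arising from $0 \to \ZZ \xrightarrow{\cdot 2} \ZZ \to \FF_2 \to 0$; this produces a natural map $\CH^n(X) \otimes \FF_2 \to \Hoh^{2n, n}(X; \FF_2)$ sending $\tilde b \otimes 1$ to the class $b$ of Proposition \ref{pr:DIone}. Comparing presentations, the image is precisely the subring $\FF_2[b]/(b^r) \subset \Hoh^{*,*}(DQ_{2r-1}; \FF_2)$ generated by $b$. The principal obstacle throughout is the Gysin image computation, which rests on the precise ring structure of $\CH^*(Q_{2r-2})$ for the split quadric; once Karpenko's classical result is invoked, the rest is formal bookkeeping.
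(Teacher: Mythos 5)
Your argument is correct and is essentially the paper's: the paper gives no independent proof but simply cites the first two paragraphs of the proof of Theorem 4.9 in Dugger--Isaksen (which in turn rest on Karpenko's computation for split quadrics), and that is exactly the localization-sequence-plus-Gysin-image computation you have written out, together with the standard injectivity of $\CH^n(X)\otimes\FF_2 \to \Hoh^{2n,n}(X;\FF_2)$ for the last claim. No gaps.
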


The reason we have explained all this is that there is a composite of maps
\begin{equation}
 \label{eq:1}
 DQ_{2r-1} \to (\AA^{r} \sm \{0\})/ C_2 \to B(r; \A^2),
\end{equation}
both of which are $\Aone$-equivalences, and so Propositions \ref{pr:DIone} and \ref{pr:DItwo} amount to a
calculation of the motivic and \'etale cohomologies and Chow rings of $B(r; \AA^2)$. Both maps in diagram \eqref{eq:1} are
compatible in the evident way with an increase in $r$, so that we may use the material of this section to compute the
stable invariants of $B(r; \AA^2)$ in the sense of Section \ref{sect.homotopy}.

The $\Aone$-equivalence $B(r; \AA^2) \to (\AA^{r} \sm \{0\})/ C_2$ was constructed above in Lemma \ref{lem.weakequiv}, so it
remains to prove the following.

\begin{lemma} \label{lem:AffDQ}
 Let $r \ge 1$. The variety $DQ_{2r-1}$ is affine and has coordinate ring
 \begin{equation}
 \label{eq:3}
 R = \left[\frac{k[x_1,\dots, x_r, y_1, \dots, y_r]}{\big(1-\sum_{i=1}^r x_iy_i
 \big)}\right]^{C_2} 
 \end{equation}
 where the $C_2$ action on $x_i$ and $y_i$ is by $x_i \mapsto -x_i$ and $y_i \mapsto -y_i$.
\end{lemma}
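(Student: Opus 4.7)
The plan is to identify $DQ_{2r-1}$ with the spectrum of the degree-$0$ part of a graded localization, and then to match that ring with the invariant ring in the statement.

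First I would grade $S = k[a_1,\dots,a_r,b_1,\dots,b_r]$ so that each generator has degree $1$, and set $f = \sum_{i=1}^r a_ib_i \in S_2$. By the standard description of the complement of a hypersurface in projective space, $DQ_{2r-1} = \Spec T$ where $T = (S_f)_0$ is the degree-zero subring of the localization $S_f$; in particular, $DQ_{2r-1}$ is affine. This takes care of the first assertion of the lemma.

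Next I would set $A = k[x_1,\dots,x_r,y_1,\dots,y_r]/(1-\sum x_iy_i)$, so that the ring $R$ in the statement equals $A^{C_2}$. The $\ZZ/2$-grading on $k[x,y]$ by parity of total degree descends to $A$ because the relation $1-\sum x_iy_i$ lies in the even part; with respect to this grading $C_2$ acts trivially on $A_{\mathrm{even}}$ and by sign on $A_{\mathrm{odd}}$, so $R = A_{\mathrm{even}}$. I would define $\phi : T \to R$ by sending a homogeneous fraction $p/f^m$ (with $p \in S_{2m}$) to the class of $p(x,y)$ in $A$; this is well-defined because $f(x,y) \equiv 1$ in $A$ and lands in $A_{\mathrm{even}}$ since $p$ has even total degree. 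Conversely I would define $\psi : R \to T$ by representing a class in $R = A_{\mathrm{even}}$ by an even polynomial $p = \sum_m p_{2m} \in k[x,y]$ and setting $\psi(p) = \sum_m p_{2m}/f^m \in T$.

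The main (and essentially only substantive) step is checking that $\psi$ does not depend on the chosen representative. If two even representatives $p, p'$ differ by $(1-f)h$, then since $1-f$ is even and $k[x,y]$ is a domain a parity comparison on homogeneous components forces $h = \sum_m h_{2m}$ to be even as well; one then computes
\[ \psi(p) - \psi(p') = \sum_m \frac{h_{2m} - f\,h_{2m-2}}{f^m} = \sum_m \frac{h_{2m}}{f^m} - \sum_m \frac{h_{2m-2}}{f^{m-1}} = 0 \]
in $T$ by telescoping. That $\phi$ and $\psi$ are mutually inverse is immediate on representatives, so $T \iso R$ as required. The parity/telescoping verification is the only step that requires any care; everything else is bookkeeping with the Proj construction.
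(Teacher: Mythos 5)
Your proof is correct and follows the same overall strategy as the paper's: both identify $\Gamma(DQ_{2r-1},\sh O_{DQ_{2r-1}})$ with the degree-zero part of the graded localization of $k[a_1,\dots,a_r,b_1,\dots,b_r]$ at the quadric $Q=\sum_i a_ib_i$, and both map that ring to $T=k[x,y]/(1-\sum_i x_iy_i)$ by dehomogenizing ($a_i\mapsto x_i$, $b_i\mapsto y_i$, $Q\mapsto 1$). Where you genuinely diverge is in verifying that this map is an isomorphism onto the invariant ring: the paper identifies the image as the subring of $T$ generated by the quadratic monomials $x_ix_j$, $x_iy_j$, $y_iy_j$ (i.e.\ the $C_2$-fixed subring) and proves injectivity by noting that the kernel of $k[a,b,Q^{-1}]\to T$ is the ideal $(Q-1)$, which contains no nonzero homogeneous element for degree reasons; you instead observe that the invariants are exactly the even part $A_{\mathrm{even}}$ and construct an explicit two-sided inverse $\psi$, with the telescoping computation doing the work of well-definedness. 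The two verifications carry essentially the same content --- your parity argument forcing $h$ to be even, followed by the telescoping cancellation, is the paper's ``no nonzero multiple of $Q-1$ is homogeneous'' run in the opposite direction --- but yours has the mild advantage of making surjectivity onto the invariants completely explicit rather than relying on identifying generators of the fixed ring. Both arguments use $\Char k\neq 2$ to equate the fixed ring with the even part, which is covered by the standing hypothesis of the section.
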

\begin{proof}
 The variety $DQ_{2r-1}$ is a complement of a hypersurface in $\PP^{2r-1}$, and is therefore affine.

 Let $Q$ denote
 $a_1 b_1 + \dots + a_r b_r$. The coordinate ring of $DQ_{2r-1}$ is the ring of degree-$0$ terms in the graded ring
 $S=k[a_1, \dots, a_r, b_1, \dots, b_r, Q^{-1}]$, where $|a_i| = |b_i| = 1$ and $|Q^{-1}| = -2$. This ring is the subring
 of $S$ generated by the terms $a_ia_j Q^{-1}$, $a_i b_j Q^{-1}$ and $b_i b_j Q^{-1}$.

 Consider the ring 
\begin{equation} \label{eq:TasPresnted} T = \frac{k[ x_1, \dots, x_r, y_1 , \dots , y_r]}{( 1- \sum_{i=1}^r x_i y_i )}.
\end{equation}

One may define a map of
 rings $\phi: S \to T$ by sending $a_i \mapsto x_i$ and $b_i \mapsto y_i$, since $Q \mapsto 1$ under this
 assignment. Restricting to $\Gamma(DQ_{2r-1}, \sh O_{DQ_{2r-1}}) \subset S$, one obtains a map $\Gamma(DQ_{2r-1}, \sh O_{DQ_{2r-1}}) \to T$ for which the image is precisely the subring
 generated by terms $x_ix_j$, $x_i y_j$ and $y_i y_j$, i.e., the fixed subring under the $C_2$ action given by $x_i
 \mapsto -x_i$ and $y_i \mapsto -y_i$.

 It remains to establish this map is injective. We show that the kernel of the map $\phi: S \to T$ contains only one
 homogeneous element, $0$, so that the restriction of this map to the subring of degree-$0$ terms in $S$ is
 injective. The kernel of $\phi$ is the ideal $(Q-1)$. Since $S$ is an integral domain, degree considerations imply that
 no nonzero multiple of $(Q-1)$ is homogeneous.
\end{proof}

\begin{proposition} \label{pr:Jouan}
 For all $r$, there is an $\Aone$-equivalence \[DQ_{2r-1} \to (\AA^{r} \sm \{0\})/ C_2.\]
\end{proposition}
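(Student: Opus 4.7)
The plan is to construct the map $DQ_{2r-1} \to (\AA^r \setminus \{0\})/C_2$ as the quotient of a natural $C_2$-equivariant affine bundle, and then use local triviality of that bundle to deduce the $\Aone$-equivalence.

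First, I would verify that the projection $\pi\colon \Spec T \to \AA^r \setminus \{0\}$, $(x, y) \mapsto x$, where $T$ is the ring \eqref{eq:TasPresnted} from the proof of Lemma \ref{lem:AffDQ}, is well-defined (since $\sum_i x_i y_i = 1$ precludes $x = 0$) and $C_2$-equivariant for the negation actions on both sides. Freeness of the $C_2$-action on the source (from the relation $\sum_i x_i y_i = 1$) and on the target (using $\Char k \neq 2$) allows passage to geometric quotients, yielding, by Lemma \ref{lem:AffDQ}, the desired morphism $\bar\pi\colon DQ_{2r-1} \to (\AA^r \setminus \{0\})/C_2$.

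Second, I would exhibit $\pi$ as a Zariski-locally trivial $\AA^{r-1}$-bundle. Over the $C_2$-stable basic open $D(x_i) \subset \AA^r \setminus \{0\}$, solving for $y_i = x_i^{-1}(1 - \sum_{j\neq i} x_j y_j)$ gives an isomorphism $T[x_i^{-1}] \cong k[x_1,\dots,x_r,x_i^{-1}][y_1,\dots,\widehat{y_i},\dots,y_r]$, presenting $\Spec T|_{D(x_i)} \to D(x_i)$ as a trivial $\AA^{r-1}$-bundle on which $C_2$ acts diagonally by negation. The opens $D(x_i)/C_2$ cover $(\AA^r\setminus\{0\})/C_2$, and pulling $\bar\pi$ back along the étale double cover $D(x_i) \to D(x_i)/C_2$ recovers this trivial bundle; hence $\bar\pi$ is étale-locally on the target a trivial $\AA^{r-1}$-bundle. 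Since a morphism that is étale-locally on the target a trivial affine bundle is an $\Aone$-equivalence, $\bar\pi$ is the required $\Aone$-equivalence.

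The main obstacle I foresee is the descent step: one must verify that the equivariant bundle structure passes correctly through the geometric quotient so that $\Aone$-invariance may be invoked, rather than obtaining a morphism that merely has $\AA^{r-1}$-fibers. One way to address this cleanly is to observe that the line bundle on $D(x_i)/C_2$ encoding the sign representation of the double cover is Zariski-trivialized by the odd function $x_i$; it follows that the $C_2$-twist of the trivial $\AA^{r-1}$-bundle is itself Zariski-trivial on $D(x_i)/C_2$, whence $\bar\pi$ is honestly a Zariski-locally trivial $\AA^{r-1}$-bundle and the $\Aone$-equivalence is immediate.
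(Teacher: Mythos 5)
Your proof is correct and follows essentially the same route as the paper: both realize $\Spec T \to \AA^r \setminus \{0\}$ as an affine ($\AA^{r-1}$-)bundle trivialized over the $C_2$-stable opens $D(x_j)$ and then descend to the quotients. Your final paragraph is in fact slightly more careful than the paper's proof, which simply asserts that the quotient $(\AA^{r-1}\times U_j)/C_2 \to U_j/C_2$ is a vector bundle; your observation that the sign twist is Zariski-trivialized by the odd invertible function $x_j$ is exactly the point needed to justify that assertion.
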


\begin{proof}
 Let $T$ be as in the proof of Lemma \ref{lem:AffDQ}. It is well known that $\Spec T$ is an affine vector bundle torsor
 over $\AA^r \setminus \{0\}$. In fact, for each $j \in \{1, \dots, r\}$, if we define
 $U_j \iso \AA^1\sm\{0\} \times \AA^{r-1}$ to be the open subscheme of $\AA^r \sm \{0\}$ where the $j$-th coordinate is
 invertible, then we arrive at a pull-back diagram
 \[ \xymatrix{ \AA^{r-1} \times U_j \iso \Spec T \times_{\AA^r \setminus \{0\}} U_j \ar[r] \ar[d] & \Spec T \ar[d] \\ U_j
 \ar[r] & \AA^r \setminus \{0\} } \]
 Since $U_j$ inherits a free $C_2$-action, it follows that in the quotient we obtain a vector bundle $(\AA^{r-1} \times
 U_j)/C_2 \to U_j/C_2$, and so the map $(\Spec T) / C_2 \to (\AA^r \sm \{0\})/ C_2$ is an $\AA^1$-equivalence, as claimed.
\end{proof}

As a consequence of Proposition \ref{pr:Jouan} we observe that the affine variety $DQ_{2r-1}$ is an affine approximation of $B(r;\AA^2)$. 

% \begin{corollary} \label{cor:genOfQuadraticEtale}
% Let $X$ be a smooth variety over a field $k$ of characteristic different from $2$. Let $\sh A$ be an \'etale algebra of
% degree $2$ over $X$ and suppose $(a_1, \dots, a_r)$ is an $r$-tuple of global sections of $\sh A$ that generate it as
% an algebra. Let $\phi: X \to B(r; \AA^2)$ be the classifying map, and let $c_1(\sh A) \in \CH^*(X)$ denote
% $\phi^*(\tilde b)$, where $\tilde b$ is a generator of $\CH^1(B(r; \AA^2))$. Suppose $\sh A$ can be generated by $r-1$
% global sections. Then $c_1(\sh A)^r = 0$.
% \end{corollary}
% \begin{proof}
% Suppose $\sh A$ can be represented by $r-1$ global sections, then there is a classifying map $\phi' : X \to B(r-1;
% \AA^2)$. While $\phi$ and $\phi'$ are \textit{a priori} unrelated maps, we know by Proposition \ref{prop.htylem} that
% there is some $N \gg r$ such that the two composites $\tilde \phi : X \to B(r, \AA^2) \to B(N; \AA^2)$ and $\tilde
% \phi': X \to B(r-1; \AA^2) \to B(N; \AA^2)$ are $\Aone$-homotopic, and so induce the same map on Chow rings. By
% Proposition \ref{pr:DItwo}, $(\tilde \phi')^*(\tilde b) =\tilde \phi^*(\tilde b) = \phi^*(\tilde b)$. Again, by
% Proposition \ref{pr:DItwo}, $(\tilde \phi')^*(\tilde b^r) = (\phi')^*(\tilde b^r) = 0$.
% \end{proof}

\section{Relation to line bundles in the quadratic case} \label{sec:Relation}

We continue to work over a field $k$, and to require that the characteristic of $k$ be different from $2$.

In the case where $n=2$, the structure group of the degree-$n$ \'etale algebra is $C_2$, the cyclic group of order $2$,
which happens to be a subgroup of $\Gm$. More explicitly, $\Hoh^1_{\et}(\Spec R ; C_2)$ is an abelian group which is isomorphic to the isomorphism classes of quadratic \'etale algebras on $\Spec R$. On the other hand due to the Kummer sequence and $C_2\subset \Gm$ we have 
\[ 0\to R^*/R^{*2} \to \Hoh^1_{\et}(\Spec R ; C_2)\to {}_2\Pic(R)\to 0 \]
which means that $\Hoh^1_{\et}(\Spec R ; C_2)$ is identified with the set of isomorphism classes of $2$-torsion line bundles $\sh L$ with a choice of trivialization $\phi:\sh L \otimes \sh L \isomto \sh O_R$. 

 This is the basis of the following construction.

\begin{construction} \label{cons:LfromA} Let $X$ be a scheme such that $2$ is invertible in all residue fields, and let
 $\sh A$ be a quadratic \'etale algebra on $X$. There is a trace map \cite[Section I.1]{knus}:
 \[ \Tr : \sh A \to \sh O \]
 and an involution $\sigma: \sh A \to \sh A$ given by $\sigma = \Tr - \id$. Define $\sh L$ to be the kernel of $\Tr:
 \sh A \to \sh O$. The sequence of sheaves on $X$
 \begin{equation}
 \label{eq:4}
 0 \to \sh L \to \sh A \to \sh O \to 0
 \end{equation}
 is split short exact, where the splitting $\sh O \to \sh A$ is given on sections by $x \mapsto \frac{1}{2} x$.

 The construction of $\sh L$ from $\sh A$ gives an explicit instantiation of the map $\Hoh^1_{\et}(X , C_2) \to
 \Hoh^1_{\et} (X, \Gm)$ on isomorphism classes. We note that $\sh L$ must necessarily be a $2$-torsion line
 bundle, in that $\sh L \tensor \sh L $ is trivial.
\end{construction}

It is partly possible to reverse the construction of $\sh L$ from $\sh A$.
\begin{construction} \label{cons:AfromL}
 Let $X$ be as above, and let $\sh L$ be a line-bundle on $X$ such that there is an isomorphism $\sh L \tensor \sh L
 \to \sh O$. Let $\phi: \sh L \tensor \sh L \to \sh O$ be a specific choice of
 isomorphism. From the data $(\sh L, \phi)$, we may produce an \'etale algebra $\sh A = \sh O \oplus \sh L$ on which
 the multiplication is given, on sections, by $(r, x) \cdot (r', x') = (rr' + \phi(x \tensor x'), rx' + r' x)$. 
\end{construction}

\begin{proposition} \label{pr:generationEquivalence}
 Let $X$ be a scheme such that $2$ is invertible in all residue fields of points of $X$. Let $\sh A$ a quadratic
 \'etale algebra on $X$. Let $\sh L$ be the associated line bundle to $\sh A$, as in Construction
 \ref{cons:LfromA}. Suppose $a_1, \dots, a_r$ are global sections of $\sh A$. Then $a_1, \dots, a_r$ generate
 $\sh A$ as an algebra if and only if $a_1 - \frac{1}{2} \Tr(a_1), \dots, a_r - \frac{1}{2} \Tr(a_r)$ generate $\sh L$
 as a line bundle.
\end{proposition}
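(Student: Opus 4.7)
The plan is to reduce both conditions---generation of $\sh A$ as an algebra, and generation of $\sh L$ as a line bundle---to fiber-by-fiber statements over residue fields, where the equivalence becomes tautological. For the algebra condition, I would apply Proposition \ref{pr:genIsLocal}(4): working locally on an affine open $U \subseteq X$, the sections $a_i$ generate $\sh A|_U$ iff, for each $\mathfrak m \in \MaxSpec \sh O_X(U)$, the images $\bar a_i$ generate $\sh A \otimes k(\mathfrak m)$ as a $k(\mathfrak m)$-algebra. For the line bundle $\sh L$, the analogous fiberwise criterion---that $\ell_1, \dots, \ell_r$ generate $\sh L|_U$ iff their images span the 1-dimensional fiber $\sh L \otimes k(\mathfrak m)$ at each such point---is standard and follows from Nakayama's lemma applied to the cokernel of $\sh O_U^{\,r} \to \sh L|_U$.

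Since $\Tr$, $\sigma$, and the splitting $\sh A = \sh O \cdot 1 \oplus \sh L$ of Construction \ref{cons:LfromA} are all compatible with base change, we have $\bar \ell_i = \bar a_i - \tfrac12 \Tr(\bar a_i)$ inside $\sh A \otimes k(\mathfrak m)$. It therefore suffices to prove the equivalence for a quadratic \'etale algebra $\sh A$ over a field $k$ of characteristic $\neq 2$: the elements $a_i \in \sh A$ generate $\sh A$ as a $k$-algebra iff the elements $\ell_i = a_i - \tfrac12 \Tr(a_i)$ span the 1-dimensional $k$-vector space $\sh L = \ker\Tr$. Since $\sh A$ is 2-dimensional over $k$, any $k$-subalgebra (which necessarily contains $k \cdot 1$) is either $k \cdot 1$ itself or all of $\sh A$. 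Hence $\{a_i\}$ generates $\sh A$ iff at least one $a_i$ lies outside $k \cdot 1$, iff at least one $\ell_i \neq 0$, iff $\{\ell_i\}$ spans the 1-dimensional space $\sh L$.

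The main obstacle is essentially bookkeeping: establishing the fiberwise reformulation for generation of a line bundle by global sections (quickly, by Nakayama on $\coker(\sh O_U^{\,r} \to \sh L|_U)$), and verifying that the trace and the $\sh O \cdot 1 \oplus \sh L$ splitting commute with base change. Once these formalities are in place, the argument over a field is immediate from the dimension count, and no further subtlety arises from the assumption that $2$ is invertible---it is used only to write the splitting $a = \tfrac12\Tr(a) + (a - \tfrac12\Tr(a))$.
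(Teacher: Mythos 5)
Your proposal is correct and follows essentially the same route as the paper: reduce both generation conditions to residue fields via Proposition \ref{pr:genIsLocal} and Nakayama's lemma, then settle the field case by an elementary observation. The only cosmetic difference is that the paper writes the quadratic algebra explicitly as $F[z]/(z^2-c)$ while you argue by a dimension count on subalgebras; both are immediate.
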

\begin{proof}
 Write $q$ for the map $a \mapsto a - \frac{1}{2} \Tr(a)$. The questions of generation of $\sh A$ and of $\sh L$ may be
 reduced to residue fields at points of $X$, by Proposition \ref{pr:genIsLocal} for the algebra and Nakayama's lemma
 for the line bundle.

 We may therefore suppose $F$ is a field of characteristic different from $2$, and that $A/ F$ is a quadratic \'etale algebra.
 Since $2$ is invertible, we may write $A = F[z]/(z^2 - c)$ for some element $c \in F^\times$. In this
 presentation, $\sigma(z) = -z$ and $\Tr(az + b) = 2b$. The kernel of the trace map, i.e. $\sh L$, is therefore $Fz$. The map $q: A
 \to Fz$ is given by $q(a z + b) = az$.

 An $r$-tuple $\vec a = (a_1 z + b_1, \dots , a_r z + b_r)$ of elements of $A$ generate it as an $F$-algebra if and only
 if $q(\vec a) = (a_1z , \dots a_rz)$ do. This tuple generates $A$ as an algebra if and only if at least one of the
 $a_i$ is nonzero, which is exactly the condition for it to generate $Fz$ as an $F$-vector space% ---if one $a_i \in R^\times$, then the algebra generated contains $z$ and therefore all of
 % $A$, whereas if $a_i \in \mathfrak m$ for all $i$, they correspond to the zero element in the \'etale algebra at the
 % closed point of $\Spec R$ and hence, by Proposition \ref{pr:genIsLocal}, cannot generate $A$.
 %
 % On the other hand, $q(\vec a) =(a_1z , \dots, a_rz)$, and this generates $Rz = \ker(\Tr)$ if and only if at least one
 % of the $a_i$ is a unit.
\end{proof}

\begin{remark}
 Let $k$ be a field of characteristic different from $2$. Let $X$ be a $k$-variety. An \'etale algebra of degree $2$ generated by $r$ global
 sections corresponds to a map $X \to B(r; \AA^2)$. A line bundle generated by $r$ global sections corresponds to a map
 $X\to \PP^{r-1}$. In the light of Proposition \ref{pr:generationEquivalence}, there must be a map of varieties $B(r;
 \AA^2) \to \PP^{r-1}$. This map is given by 
 \[ B(r; \AA^2) \overset{\iso}{\to } (\AA^r\sm\{0\})/C_2 \times \AA^r \overset{p_1}{\to} (\AA^r \sm \{0\}) / C_2\to
 (\AA^r \sm \{0\} )/ \Gm \isomto \PP^{r-1} \]
 where the morphisms are, left to right, the isomorphism of Lemma \ref{lem.weakequiv}, projection onto the second
 factor, and the map induced by the inclusion $C_2 \subset \Gm$.
\end{remark}

% \benw{check if this is in the literature}
% \begin{corollary}
% Let $X$ be a smooth variety over a field $k$ of characteristic different from $2$. Let $\sh L$ be a $2$-torsion line
% bundle over $X$. Suppose $\sh L$ can be generated by $r-1$ global sections. Then $c_1(\sh L)^r = 0$.
% \end{corollary}
% \begin{proof}
% This follows immediately from Proposition \ref{pr:generationEquivalence} and Corollary \ref{cor:genOfQuadraticEtale}.
% \end{proof}

\section{The example of Chase} \label{sec:Chase}

The following will be referred to as ``the example of Chase''.

\begin{construction} \label{cons:Chase}
 Let $S = \RR[ z_1, \dots, z_r ]/\Big( \sum_{i=1}^r z_i^2 - 1 \Big)$
 and equip this with the $C_2$-action given by $z_i \mapsto -z_i$. Let $R = S^{C_2}$. The dimension of both $R$ and $S$ is $r-1$.

 The ring $R$ carries a projective module of rank $1$, i.e., a line bundle, that requires $r$ global sections in order
 to generate it. This example given in \cite[Theorem 4]{swan}.
\end{construction}

\begin{remark}
 In fact, the line bundle in question is of order $2$ in the Picard group, so Proposition
 \ref{pr:generationEquivalence} applies and there is an associated quadratic \'etale algebra on $\Spec R = Y(r)$
 requiring $r$ generators. The algebra is, of course, dependent on a choice of trivialization of the square of the
 line bundle, but one may choose the trivialization so the \'etale algebra in question is $S$ itself as an $R$-algebra.
\end{remark}

\begin{remark}
 This construction shows that the bound of First and Reichstein, \cite{first}, on the number of generators required by
 an \'etale algebra of degree $2$ is tight. This was first observed, to the best of our knowledge, by M.~Ojanguren in private communication.

 Even better, replacing $S$ by $S \times R^{n-2}$ over $R$, one produces a degree-$n$ \'etale algebra over $R$
 requiring $r$ elements to generate, so the bound is tight in the case of \'etale algebras of arbitrary degrees. We owe
 this observation to Zinovy Reichstein.
\end{remark}

The original method of proof that the line bundle in the example of Chase cannot be generated by fewer than $r$ global
sections uses the Borsuk--Ulam theorem. Here we show that a variation on that proof follows naturally from our general
theory of classifying objects. The Borsuk--Ulam theorem is a theorem about the topology of $\RRP^r$, so it can be no
surprise that it is replaced here by facts about the singular cohomology of $\RRP^r$.

\subsection{The homotopy type of the real points of \texorpdfstring{$B(r;\AA^2)$}{B(r,A2)}}

In addition to the general results about the motivic cohomology of $B(r; \AA^2)$, we can give a complete description of
the homotopy type of the real points $B(r; \AA^2)(\RR)$.

If $X$ is a nonsingular $\RR$-variety, then it is possible to produce a complex
manifold from $X$ by first extending scalars to $\CC$ and then employing the usual Betti realization functor to produce
a manifold $X(\CC)$. Since $X$ is defined over $\RR$, however, the resulting manifold is equipped with an action of the
Galois group $\operatorname{Gal}(\CC/\RR) \iso C_2$. We write $X(\RR)$ for the Galois-fixed points of $X(\CC)$.

\begin{remark}\label{rem.finprod}
 The real realization functor $X \leadsto X(\RR)$ preserves finite products, so that if $f,g: X\to Y$ are two maps of
 varieties and $H: X \times \Aone \to X'$ is an $\Aone$-homotopy between them, then $f(\RR), g(\RR)$ are homotopic maps
 of varieties, via the homotopy obtained by restricting
 $H(\RR): X(\RR) \times \Aone(\RR) = X(\RR) \times \RR \to X'(\RR)$ to the subspace $X(\RR) \times [0,1]$.
\end{remark}

Using Lemma \ref{lem.weakequiv}, present $U(r; \AA^2)$ as the variety of $2r$-tuples \[(z_1, \dots, z_r,w_1, \dots,
w_r)\quad \text{ such
that } \quad (z_1, \dots, z_r) \neq (0, \dots, 0).\] This variety carries an action by $C_2$ sending $z_i \mapsto -z_i$ and
fixing the $w_i$. We know $U(r;\AA^2)$ and $B(r;\AA^2)$ are naively homotopy
equivalent to $\mathbb{A}^r\setminus \{0\}$ and $\mathbb{A}^r\setminus \{0\}/C_2$ respectively.

\begin{construction} \label{cons:YP} We now consider an inclusion that is not, in general, an equivalence. Let
 $P(r) = \Spec S$ denote the subvariety of $\A^r\sm \{0\}$ consisting of $r$-tuples $(z_1, \dots, z_r)$ such that
 $\sum_{i=1}^r z_i^2 = 1$. This is an $(r-1)$-dimensional closed affine subscheme of $\A^r \sm \{0\}$, invariant under
 the $C_2$ action on $\A^r \sm \{0\}$. The quotient of $P(r)$ by $C_2$ is $Y(r) = \Spec R$, and is equipped with an
 evident map $Y(r) \to (\AA^r \sm \{0\}) /C_2 \to B(r; \AA^2)$. Here $S$ and $R$ take on the same meanings as in
 Construction \ref{cons:Chase}.
\end{construction}

\begin{proposition} \label{prop:defRet} Let notation be as in Construction \ref{cons:YP}. The real manifold
 $B(r;\A^2)(\RR)$ has the homotopy type of
 \[ B(r; \A^2) (\RR) \simeq \RRP^{r-1} \amalg \RRP^{r-1} .\]
 The closed
inclusion $Y(r) \to B(r;\A^2)$ includes $Y(r)(\RR) \to B(r;\A^2)(\RR)$ as a deformation retract of one of the connected components.
\end{proposition}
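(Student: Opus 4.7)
The plan is to unravel the real points of $B(r;\AA^2)$ via Lemma~\ref{lem.weakequiv} and then exhibit the deformation retract by hand. From Lemma~\ref{lem.weakequiv}, $B(r;\AA^2)\iso (\AA^r\sm\{0\})/C_2\times\AA^r$ as $\RR$-varieties, so $B(r;\AA^2)(\RR) = [(\AA^r\sm\{0\})/C_2](\RR)\times\RR^r$. Since the second factor is contractible, the entire homotopy analysis reduces to identifying $[(\AA^r\sm\{0\})/C_2](\RR)$.

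First I would describe this space as follows. The map $\AA^r\sm\{0\}\to (\AA^r\sm\{0\})/C_2$ is an \'etale $C_2$-torsor, so the real points of the base are exactly the Galois-stable $C_2$-orbits on $\CC^r\sm\{0\}$. Since $C_2$ acts by $v\mapsto -v$ and Galois by complex conjugation, a Galois-stable orbit $\{v,-v\}$ is of one of two types: either $v\in \RR^r\sm\{0\}$, in which case Galois fixes each element; or $\bar v = -v$, equivalently $v\in i\RR^r\sm\{0\}$, in which case Galois swaps the two elements. Each type contributes a copy of $(\RR^r\sm\{0\})/C_2$, which deformation retracts to $\RRP^{r-1}$ by normalizing to the unit sphere. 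A local analytic check at a Galois-fixed orbit shows the two types correspond to disjoint real (resp.\ purely imaginary) local slices of $\CC^r$, so the two copies are separate connected components. This yields $B(r;\AA^2)(\RR)\simeq \RRP^{r-1}\amalg \RRP^{r-1}$.

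For the second assertion, I would apply the same orbit analysis to $Y(r) = P(r)/C_2$. A hypothetical type-(b) real orbit of $Y(r)$ would require $w\in\RR^r$ with $\sum (iw_i)^2 = -\sum w_i^2 = 1$, which has no real solution. Hence only type-(a) orbits occur, giving $Y(r)(\RR) = S^{r-1}/C_2 = \RRP^{r-1}$, and its image in $B(r;\AA^2)(\RR)$ lies entirely in the type-(a) component, which is homeomorphic to $(\RR^r\sm\{0\})/C_2\times\RR^r$. A deformation retract of this component onto $Y(r)(\RR)\times\{0\}$ is then assembled in two stages: contract the $\RR^r$ factor to $0$ linearly via $(w,t)\mapsto (1-t)w$, and $C_2$-equivariantly normalize $v\in\RR^r\sm\{0\}$ to $v/\|v\|$ via $(v,t)\mapsto v/((1-t)+t\|v\|)$.

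The principal obstacle is the local-Galois argument identifying $[(\AA^r\sm\{0\})/C_2](\RR)$ as two disjoint copies of $(\RR^r\sm\{0\})/C_2$ and verifying that they form distinct connected components; once that is secured the remaining bookkeeping, including the deformation-retract formulas, is elementary.
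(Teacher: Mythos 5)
Your proposal is correct and follows essentially the same route as the paper: reduce via Lemma~\ref{lem.weakequiv} to $(\AA^r\sm\{0\})/C_2$, identify the real points as Galois-stable sign-orbits splitting into an all-real and an all-imaginary component each of the homotopy type of $\RRP^{r-1}$, note that the imaginary type is incompatible with $\sum z_i^2=1$ so $Y(r)(\RR)=\RRP^{r-1}$ lands in one component, and retract that component onto it. The only difference is that you write out the retraction formulas explicitly where the paper calls the step elementary.
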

\begin{proof} By Lemma \ref{lem.weakequiv} and Remark \ref{rem.finprod}, the manifold $B(r,\A^2)(\RR)$ is homotopy
 equivalent to $(\A^r\sm \{0\}/C_2)(\RR)$. The manifold $(\A^r\sm \{0\}/C_2)(\CC)$ consists of equivalence classes of $r$-tuples of
 complex numbers $(z_1,...,z_r)$, where the $z_i$ are not all $0$, under the relation
\[ (z_1,...,z_r) \sim (-z_1,...,-z_r).\]
The real points of $(\A^r\sm \{0\})/C_2$ consist of Galois-invariant equivalence classes. There are two components of this manifold: either
the terms in $(z_1,...,z_r)$ are all real or they are all imaginary. In either case, the connected component is
homeomorphic to the manifold $\RR P^{r-1}$.

 We now consider the manifold $Y(r)(\RR)$. This arises as the Galois-fixed points of $Y(r)(\CC)$, which in turn is the
 quotient of $P(r)(\CC)$ by a sign action. That is, $P(r)(\CC)$ is the complex manifold of $r$-tuples $(z_1, \dots, z_r)$
 satisfying $\sum_{i=1}^r z_i^2 = 1$. Again, in the $\RR$-points, the $z_i$ are either all real or all purely
 imaginary. The condition $\sum_{i=1}^r z_i^2 = 1$ is incompatible with purely imaginary $z_i$, so $Y(r)(\RR)$ is the
 manifold of $r$-tuples of real numbers $(z_1, \dots, z_r)$ satisfying $\sum_{i=1}^r z_i^2 =1$, taken up to sign. In
 short, $Y^r(\RR) = \RRP^{r-1}$.

 As for the inclusion $Y(r)(\RR) \to B(r;\AA^2)(\RR)$, it admits the following description, as can be seen by tracing through
 all the morphisms defined so far. Suppose given an equivalence class of real numbers $(z_1, \dots, z_r)$, satisfying
 $\sum_{i=1}^r z_i^2 = 1$, taken up to sign. Then embed $(z_1, \dots, z_r)$ as the point of $B(r;\AA^2)(\RR)$ given by the
 class of $(z_1, z_2, \dots, z_r, 0, \dots, 0)$. That is, embed $\RRP^{r-1}$ in
 $\RR^r \times \left(\RR^{r-1} \sm \{0\} \right) /C_2$ by embedding $\RRP^{r-1} \subset (\RR^r \sm \{0\})/ C_2$ as a
 deformation retract, and then embedding the latter space as the zero section of the trivial bundle. It is elementary
 that this composite is also a deformation retract.
\end{proof}

\begin{remark}
 We remark that the functor $X \leadsto X(\RR)$ does not commute with colimits. For instance $U(r;\AA^2)(\RR) / C_2$, which is
 connected, is not the same as $B(r;\AA^2)(\RR)$.
 
 In fact, the two components of $B(r;\AA^2)(\RR)$ as calculated above correspond to two isomorphism classes of quadratic
 \'etale $\RR$-algebras: one component corresonds to the split algebra $\RR \times \RR$, and the other to the nonsplit
 $\CC$.
\end{remark}

We will need two properties of $\Hoh^*(\RRP^r; \FF_2)$ here. Both are standard and may be found in \cite{hatcher}.
\begin{itemize}
\item $\Hoh^*(\RRP^r ;\FF_2) \iso \FF_2[\theta]/(\theta^{r+1})$ where $|\theta| = 1$.
\item The standard inclusion of $\RRP^r \hookrightarrow \RRP^{r+1}$ given by augmenting by $0$ induces the evident
 reduction map $\theta \mapsto \theta$ on cohomology.
\end{itemize}

\begin{proposition}
\label{prop.stablecoho} We continue to work over $k = \RR$. Let
$s_r:B(r;\AA^2)\to B(r+1;\AA^2)$ be the stabilization map of Definition \ref{def:stabilizeMaps}. The induced map on cohomology groups
\[ s_r^*: \Hoh^j(B(r+1;\AA^2)(\RR) ; \FF_2 ) \to \Hoh^j(B(r;\AA^2)(\RR); \FF_2) \]
is an isomorphism when $j \le r$ and is $0$ otherwise
\end{proposition}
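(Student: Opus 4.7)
The plan is to reduce the statement to a direct calculation on the real manifolds, using the description of the homotopy types given in Proposition~\ref{prop:defRet} together with the compatibility of real realization with naive $\Aone$-homotopy from Remark~\ref{rem.finprod}. Combined with the recalled computation $\Hoh^\ast(\RRP^n;\FF_2) \iso \FF_2[\theta]/(\theta^{n+1})$, this immediately gives that $\Hoh^j(B(r;\AA^2)(\RR);\FF_2) \iso \FF_2 \oplus \FF_2$ for $0 \le j \le r-1$ and vanishes otherwise, while $\Hoh^j(B(r+1;\AA^2)(\RR);\FF_2) \iso \FF_2 \oplus \FF_2$ for $0 \le j \le r$ and vanishes otherwise.

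Next I would identify the map $s_r(\RR)$ explicitly. The stabilization of Definition~\ref{def:stabilizeMaps} appends the $2$-tuple $(0,0)$ to an element of $U(r;\AA^2)$; under the change of coordinates $(z_i, w_i) = (x_i - y_i, x_i + y_i)$ of Lemma~\ref{lem.weakequiv}, this amounts to appending a $0$ to both the $(z_1,\dots,z_r) \in \AA^r \sm \{0\}$ factor and the $(w_1,\dots,w_r) \in \AA^r$ factor. On real points, the two components of $B(r;\AA^2)(\RR)$ constructed in the proof of Proposition~\ref{prop:defRet} are distinguished by whether the $z_i$ are all real or all purely imaginary; appending a $0$ preserves each of these conditions, so $s_r(\RR)$ preserves each component and does not swap them. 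Restricting further to the deformation retract $Y(r)(\RR) = \{\sum z_i^2 = 1\}/C_2 \iso \RRP^{r-1}$ of each component, the map $s_r(\RR)$ becomes the standard equatorial inclusion $\RRP^{r-1} \hookrightarrow \RRP^r$ on each of the two components.

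Finally, by the second recalled property of $\Hoh^\ast(\RRP^n;\FF_2)$, the standard inclusion induces $\theta \mapsto \theta$ on $\FF_2$-cohomology, so $s_r^*$ is a direct sum of two copies of this reduction map. This is an isomorphism in degrees $j \le r-1$, is the zero map in degree $j = r$ (where the source is $\FF_2 \oplus \FF_2$ but the target is $0$), and vanishes trivially in higher degrees. The main obstacle is the second step: one must carefully trace through the coordinate change of Lemma~\ref{lem.weakequiv}, the splitting of $B(r;\AA^2)(\RR)$ into \emph{real} and \emph{purely imaginary} components described in the proof of Proposition~\ref{prop:defRet}, and the deformation retract onto $Y(r)(\RR)$, verifying in each step that the appended zero is compatible with the identification. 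Once this bookkeeping is complete, the cohomological conclusion follows at once from the recalled property that $\theta \mapsto \theta$ under the standard inclusion.
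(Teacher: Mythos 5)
Your proposal takes essentially the same route as the paper: realize the stabilization map on $\RR$-points, check via the coordinate change of Lemma \ref{lem.weakequiv} that it preserves the two components of Proposition \ref{prop:defRet} and restricts on each to the standard inclusion of real projective spaces, and then read off the effect on $\FF_2$-cohomology from the recalled facts about $\Hoh^\ast(\RRP^n;\FF_2)$. The extra bookkeeping you do (appending $(0,0)$ becomes appending $0$ to both the $z$- and $w$-factors, and $0$ is both real and purely imaginary, so neither component is swapped) is exactly what the paper leaves implicit.

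One point of genuine divergence: your final count says the map is an isomorphism for $j \le r-1$ and zero otherwise, whereas the proposition as printed claims an isomorphism for $j \le r$. Your count is the one forced by Proposition \ref{prop:defRet}, which gives $B(r;\AA^2)(\RR) \simeq \RRP^{r-1} \amalg \RRP^{r-1}$: in degree $j=r$ the source $\Hoh^r(B(r+1;\AA^2)(\RR);\FF_2)$ is $\FF_2\oplus\FF_2$ while the target vanishes, so the map cannot be an isomorphism there. The paper's own proof asserts the components give the inclusion $\RRP^{r}\to\RRP^{r+1}$, which is inconsistent with Proposition \ref{prop:defRet}; the statement and proof carry an off-by-one that you have implicitly corrected. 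This does not damage the downstream application (the proof of Ojanguren's proposition only uses surjectivity in degree $r-1$, which your version still supplies), but you should flag the discrepancy rather than silently proving a slightly different statement.
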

\begin{proof}
 The map $s_r^*$ is arrived at by considering the inclusion $U(r;\AA^2) \to U(r+1;\AA^2)$, which is given by augmenting an
 $r$-tuple of pairs $(a_1, b_1, \dots, a_r, b_r)$ by $(0,0)$, and then taking the quotient by $C_2$. After
 $\RR$-realization, one is left with a map $B(r;\AA^2)(\RR) \to B(r+1;\AA^2)(\RR)$ which on each connected component is homotopy
 equivalent to the standard inclusion $\RRP^r \to \RRP^{r+1}$. The result follows. 
\end{proof}

\begin{proposition}[Ojanguren]
 Let $S$ and $R$ be as in Construction \ref{cons:Chase}. The quadratic \'etale algebra $S/R$ cannot be generated by
 fewer than $r$ elements.
\end{proposition}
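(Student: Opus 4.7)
The plan is to argue by contradiction using mod-2 singular cohomology on real points. Suppose that $S$ is generated over $R$ by only $m < r$ elements, yielding a classifying map $\phi : Y(r) \to B(m;\AA^2)$ with $\phi^*(\mathcal{E}(m;\AA^2)) \cong S$. On the other hand, the coordinates $z_1,\dots, z_r$ tautologically generate $S$ as an $R$-algebra, producing the map $\iota : Y(r) \to B(r;\AA^2)$ of Construction \ref{cons:YP}; by Proposition \ref{prop:defRet}, its real realization is a deformation retract of $Y(r)(\RR) = \RRP^{r-1}$ onto one connected component of $B(r;\AA^2)(\RR)$.

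Both $\phi$ and $\iota$ correspond to the \emph{same} \'etale algebra $S$ equipped with different generating sets. Proposition \ref{prop.htylem} then applies: after stabilization to $B(m+r;\AA^2)$, the induced maps $\tilde\phi, \tilde\iota : Y(r) \to B(m+r;\AA^2)$ are naively $\AA^1$-homotopic. Invoking Remark \ref{rem.finprod}, passing to real points produces topologically homotopic maps $\tilde\phi(\RR), \tilde\iota(\RR) : Y(r)(\RR) \to B(m+r;\AA^2)(\RR)$, so they induce identical pullbacks on $\Hoh^*( \cdot \, ; \FF_2)$.

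The contradiction will come from examining $\Hoh^{r-1}(\cdot;\FF_2)$. On the $\iota$ side, $\iota(\RR)^*$ is an isomorphism in degree $r-1$ onto the cohomology of the component receiving $\RRP^{r-1}$ as a deformation retract, and by Proposition \ref{prop.stablecoho} the iterated stabilization $B(r;\AA^2)(\RR) \to B(m+r;\AA^2)(\RR)$ is an isomorphism on $\Hoh^{r-1}$ (since $r-1 \leq r$). Hence $\tilde\iota(\RR)^*$ is nonzero on $\Hoh^{r-1}$. On the $\phi$ side, $\tilde\phi(\RR)^*$ factors through $\Hoh^{r-1}(B(m;\AA^2)(\RR); \FF_2)$, which vanishes because each connected component of $B(m;\AA^2)(\RR)$ is homotopy equivalent to $\RRP^{m-1}$ and $r-1 \geq m$. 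Thus $\tilde\phi(\RR)^* = 0$ on $\Hoh^{r-1}$, contradicting the agreement with $\tilde\iota(\RR)^*$.

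The main obstacle, which is really a bookkeeping point, is confirming that the two classifying maps genuinely correspond to the same \'etale algebra so that Proposition \ref{prop.htylem} applies; once that is in place, the rest is a short cohomological computation whose essence is that $\RRP^{r-1}$ detects $\FF_2$-cohomology in degree $r-1$ while $\RRP^{m-1}$ does not.
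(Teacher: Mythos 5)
Your proposal is correct and follows essentially the same route as the paper's own proof: both compare the tautological classifying map $Y(r)\to B(r;\AA^2)$ of Construction \ref{cons:YP} with a hypothetical classifying map to $B(m;\AA^2)$ for $m<r$, stabilize and invoke Proposition \ref{prop.htylem} (via Corollary \ref{cor.htylem}) together with Remark \ref{rem.finprod}, and derive a contradiction in $\Hoh^{r-1}(\,\cdot\,;\FF_2)$ of the real points using Propositions \ref{prop:defRet} and \ref{prop.stablecoho}. The only cosmetic difference is that you work with a general $m<r$ rather than $m=r-1$.
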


\begin{proof}[Sketch of proof]
 Write $Y(r) = \Spec R$ as in Construction \ref{cons:YP}. The morphism $Y(r) \to B(r ; \AA^2)$ of Construction \ref{cons:YP} classifies a quadratic
 \'etale algebra over $Y(r)$, and we can identify this algebra as $S$.

 The map $\phi: Y(r) \to B(r; \AA^2)$ induces stable maps $\tilde \phi: Y(r) \to B(r; \AA^2)$. Any such stable map
 induces a surjective map
 \[ \tilde \phi^*: \Hoh^*( B(r; \AA^2)(\RR); \FF_2) \to \Hoh^*(Y(r)(\RR); \FF_2) \]
 by Proposition \ref{prop:defRet} and \ref{prop.stablecoho}. In particular, it is a surjection when $\ast = r-1$.

Suppose $S$ can be generated by $r-1$ elements, then there
 is a classifying map $\phi' : Y(r) \to B(r-1; \AA^2)$, from which one can produce a stable map 
 \[(\tilde \phi')^* : \Hoh^*( B(r ; \AA^2)(\RR): \FF_2) \to \Hoh^*(B(r-1; \AA^2); \FF_2) \to \Hoh^*(Y(r)(\RR) ;
 \FF_2). \]
 By reference to Corollary \ref{cor.htylem}, for sufficiently large values of $R$, the maps $\tilde \phi^*$ and
 $(\tilde \phi')^*$ agree. But $(\tilde \phi')^*$ induces the $0$-map when $\ast = r-1$, since $\Hoh^*(B(r-1;
 \AA^2)(\RR); \FF_2)$ is a direct sum of two copies of $\FF_2[\theta]/(\theta^{r-1})$. This contradicts the
 surjectivity of $\tilde \phi^*$ in this degree.
\end{proof}

\subsection{Algebras over fields containing a square root of \texorpdfstring{$-1$}{-1}}

 \begin{remark} \label{rem:710}
 When the field $k$ contains a square root $i$ of $-1$, the analogous construction to that of Chase exhibits markedly different
 behaviour. For simplicity, suppose $r$ is an even integer. Consider the ring
 \[ S'=\dfrac{k[z_1, \dots , z_r]}{\big(\sum_{i=1}^r z_i^2-1\big)} \]
 with the action of $C_2$ given by $z_i \mapsto -z_i$. Let $R' = (S')^{C_2}$. After making the change of variables
 $x_{j} =z_{2j-1}+iz_{2j}$ and $y_j=z_{2j-1} - iz_{2j}$, we see that $S'$ is isomorphic to
 \[\frac{k[x_1, \dots, x_{r/2}, y_1, \dots, y_{r/2}]}{\Big( \sum_{j=1}^{r/2} x_jy_j -1 \Big)}\]
 and $R'$ is isomorphic to the subring consisting of terms of even degree. The smallest $R'$-subalgebra of $S'$
 containing the $r/2$-terms $x_1, \dots, x_{r/2}$ contains each of the $y_j$ because of the relation
 \[ y_j = \sum_{l=1}^{r/2} x_l (y_l y_j) \]
 so $S'$ may be generated over $R'$ by $r/2$ elements. In fact, $R'$ is the coordinate ring of $DQ_{r-1}$, by Lemma
 \ref{lem:AffDQ}. In Proposition \ref{rem.affapp} below, we show that $S'$ cannot be
 generated by fewer than $r/2$ elements over $R'$.
\end{remark}

One may reasonably ask therefore, over a field $k$ containing a square root of $-1$:
\begin{question}
For a given dimension $d$, is there a smooth $d$-dimensional affine variety $\Spec R$ and a finite \'etale algebra $\sh A$ over
 $\Spec R$ such that $\sh A$
 cannot be generated by fewer than $d +1$ elements?
\end{question}

The result of \cite{first} implies that if $d+1$ is increased, then the answer is negative.

\begin{remark}
 If $d=1$, the answer to the question is positive. An example can be produced using any smooth affine curve $Y$ for
 which ${}_2\Pic(Y) \neq 0$. Specifically, one may take a smooth elliptic curve and discard a point to produce such a
 $Y$. A nontrivial $2$-torsion line bundle $\sh L$ on $Y$ cannot be generated by $1$ section, since it is not
 trivial. One may choose a trivialization $\phi: \sh L \tensor \sh L \to \sh O$, and therefore endow
 $\sh L \oplus \sh O$ with the structure of a quadratic \'etale algebra, as in Construction \ref{cons:AfromL}, and this algebra
 also cannot be generated by $1$ element.
\end{remark}

\begin{proposition} \label{rem.affapp}
 Let $k$ be a field containing a square root $i$ of $-1$. Let $T$ denote the ring 
 \[ T = \frac{ k [x_1 ,\dots, x_r, y_1, \dots, y_r] }{ ( \sum_{i=1}^r x_i y_i - 1 ) } \]
 endowed with the $C_2$ action given by $x_i \mapsto -x_i$ and $y_i \mapsto -y_i$. Let $R =T^{C_2}$. Then
 the quadratic \'etale algebra $T$ over $R$ can be generated by the $r$ elements $x_1, \dots, x_r$, but cannot
 be generated by fewer than $r$ elements.
\end{proposition}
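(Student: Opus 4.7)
The generation by $r$ elements is the easy direction: using the defining relation $\sum_i x_i y_i = 1$, for every index $j$ one has
\[
y_j = y_j\cdot 1 = \sum_{l=1}^r (y_l y_j)\, x_l,
\]
and each product $y_l y_j$ lies in $R$ because it has even total degree in the $x_i,y_i$. Thus $R[x_1,\dots,x_r]$ contains every $y_j$, and consequently every monomial, so it equals $T$.

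For the lower bound the plan is to compute Chow groups mod $2$ and use the stabilization corollary. First, Lemma \ref{lem:AffDQ} identifies $\Spec R$ with the affine variety $DQ_{2r-1}$, and Proposition \ref{pr:DItwo} gives
\[
\CH^{\ast}(\Spec R) \;=\; \ZZ[\tilde b]/(2\tilde b,\,\tilde b^{r}), \qquad |\tilde b|=1,
\]
with $\tilde b^{r-1}$ a nonzero generator of $\CH^{r-1}(\Spec R)\cong \FF_2$. Next I will identify the classifying morphism $\phi : \Spec R \to B(r;\AA^2)$ attached to the datum $(T, x_1,\dots, x_r)$. After base change to $\Spec T$ (which splits $T$), the involution $\sigma$ sends $x_i \mapsto -x_i$, so each generator $x_i$ corresponds to the pair $(x_i, -x_i)$ in the trivial algebra $T^{\times 2}$; this furnishes a $C_2$-equivariant map $\Spec T \to U(r;\AA^2)$. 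Under the change of coordinates of Lemma \ref{lem.weakequiv}, it reads
\[
(\vec x,\vec y) \;\longmapsto\; (2\vec x,\,0) \;\in\; (\AA^r\sm\{0\})\times \AA^r,
\]
and taking $C_2$-quotients produces $\phi$. The projection $\Spec T\to \AA^r\sm\{0\}$, $(\vec x,\vec y)\mapsto \vec x$, is a vector-bundle torsor (exactly the map of Proposition \ref{pr:Jouan}), so after passing to $C_2$-quotients and composing with the $\AA^r$-factor, $\phi$ factors as an $\AA^1$-equivalence $\Spec R \xrightarrow{\sim} V(r;\AA^2)$ followed by the zero-section of $V(r;\AA^2)\times \AA^r = B(r;\AA^2)$. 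Hence $\phi^{\ast}:\CH^{\ast}(B(r;\AA^2))\to \CH^{\ast}(\Spec R)$ is an isomorphism, and in particular $\phi^{\ast}(\tilde b^{r-1})\neq 0$.

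Finally, I close the argument by contradiction: assume $T$ admits a generating $r'$-tuple of sections with $r'<r$, yielding a classifying map $\phi':\Spec R\to B(r';\AA^2)$. Choose $R\ge r$ and pass to the stabilized maps $\tilde\phi, \tilde\phi':\Spec R\to B(R;\AA^2)$. Corollary \ref{cor.htylem} applies to the $\AA^1$-invariant theory of mod-$2$ Chow groups and gives $\tilde\phi^{\ast}=\tilde\phi'{}^{\ast}$. The stabilization maps act by $\tilde b\mapsto \tilde b$ on Chow rings by Proposition \ref{pr:DItwo}, so $\tilde\phi^{\ast}(\tilde b^{r-1})=\phi^{\ast}(\tilde b^{r-1})$ is nonzero by the previous paragraph; but $\tilde\phi'{}^{\ast}(\tilde b^{r-1})$ factors through $\CH^{r-1}(B(r';\AA^2))=\ZZ[\tilde b]/(2\tilde b,\tilde b^{r'})$, in which $\tilde b^{r-1}=0$ because $r-1\ge r'$. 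This is the required contradiction.

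The main obstacle is the second step: tracking the classifying map $\phi$ through the sequence of identifications (base change to split $T$, the coordinate change of Lemma \ref{lem.weakequiv}, the torsor description of Proposition \ref{pr:Jouan}) precisely enough to conclude that $\phi$ is an $\AA^1$-equivalence onto the zero-section factor of $B(r;\AA^2)$; once this is in hand, the Chow-group obstruction and the stabilization corollary finish the proof immediately.
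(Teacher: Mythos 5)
Your proof is correct and follows essentially the same route as the paper's: identify $\Spec R$ with $DQ_{2r-1}$, show the classifying map of $(T,x_1,\dots,x_r)$ is the $\Aone$-equivalence of equation \eqref{eq:1}, and derive a contradiction from Proposition \ref{pr:DItwo} and Corollary \ref{cor.htylem}; you merely spell out in more detail the step the paper compresses into ``tracing through this composite.'' (One cosmetic point: you reuse the letter $R$ for both the fixed ring and the stabilization index, which is worth renaming.)
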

\begin{proof}
 The ring $R$ is the coordinate ring of the variety $DQ_{2r-1}$ in Lemma \ref{lem:AffDQ}. In particular, there is an
 $\Aone$-equivalence $\phi: DQ_{2r-1} \to B(r; \AA^2)$, as in equation \eqref{eq:1}. Tracing through this composite, one sees
 it classifies the quadratic \'etale algebra generated by $x_1, \dots, x_r$, i.e., $T$ itself---the argument being as
 given for $DQ_{r-1}$ in Remark \ref{rem:710}.

 Suppose for the sake of contradiction that $T$ can be generated by $r-1$ elements over $R$. Let
 $\phi' : DQ_{2r-1} \to B(r-1; \AA^2)$ be a classifying map for some such $r-1$-tuple of generators. Let $\tilde \phi$
 and $\tilde \phi'$ denote the composite maps $DQ_{2r-1} \to B(2r-1; \AA^2)$. By Corollary \ref{cor.htylem}, these maps
 induce the same map on Chow groups. But in degree $r-1$, the map
 $\tilde \phi^* : \CH^{r-1} ( B(2r-1; \AA^2)) \to \CH^{r-1}( B(r; \AA^2) ) \to \CH^{r-1}(DQ_{2r-1})$ is an isomorphism of
 cyclic groups of order $2$, by reference to Proposition \ref{pr:DItwo}, while by the same proposition,
 $(\tilde \phi')^* : \CH^{r-1} ( B(2r-1; \AA^2)) \to \CH^{r-1}( B(r-1; \AA^2) ) \to \CH^{r-1}(DQ_{2r-1})$ is $0$.
\end{proof}

The following shows that the bound of \cite{first} is not quite sharp when applied to quadratic
\'etale algebras over smooth $\bar k$-algebras where $\bar k$ is an algebraically closed field.

\begin{proposition}
 Let $\bar k$ be an algebraically closed field. Let $n\geq 2$, and $\Spec R$ an $n$-dimensional smooth affine
 $\bar k$-variety. If $\sh A$ is a quadratic \'etale algebra on $\Spec R$, then $\sh A$ may be generated by $n$ global
 sections.
\end{proposition}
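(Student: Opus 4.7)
The plan is to use the correspondence of Section \ref{sec:Relation} between quadratic \'etale algebras and $2$-torsion line bundles, and then to invoke a theorem of M.P.~Murthy on projective modules over smooth affine varieties to reduce the problem to a Chern-class vanishing in $\CH^n(\Spec R)$.

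By Construction \ref{cons:LfromA}, the algebra $\sh{A}$ decomposes as $\sh{O} \oplus \sh{L}$ where $\sh{L} = \ker(\Tr)$ is a $2$-torsion line bundle on $\Spec R$. Proposition \ref{pr:generationEquivalence} then says that $\sh{A}$ is generated by $n$ global sections if and only if $\sh{L}$ is. Generation of $\sh{L}$ by $n$ global sections is in turn equivalent to the rank-$n$ projective $R$-module $\sh{L}^{\oplus n}$ admitting a unimodular element (a nowhere-vanishing global section). By Murthy's theorem on top-rank projective modules over smooth affine $\bar{k}$-algebras (``Zero cycles and projective modules'', \emph{Ann.\ of Math.}\ 140 (1994)), a rank-$n$ projective module $P$ over $R$ has a unimodular element if and only if its top Chern class $c_n(P) \in \CH^n(R)$ vanishes. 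For $P = \sh{L}^{\oplus n}$ the Whitney sum formula gives $c_n(P) = c_1(\sh{L})^n$, so the task reduces to proving $c_1(\sh{L})^n = 0$ in $\CH^n(R)$.

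This Chern-class vanishing is the main obstacle. Since $\sh{L}^{\otimes 2} \cong \sh{O}$, the class $c_1(\sh{L}) \in \Pic(R) = \CH^1(R)$ is $2$-torsion, and consequently so is $c_1(\sh{L})^n \in \CH^n(R)$. The key cohomological input is that Voevodsky's norm residue isomorphism together with the Bloch-Ogus spectral sequence give an injection $\CH^n(R)/2 \hookrightarrow H^{2n}_{\et}(\Spec R, \mu_2^{\otimes n})$, and Artin's bound on the \'etale cohomological dimension of affine varieties forces $H^{2n}_{\et}(\Spec R, \mu_2^{\otimes n}) = 0$ (since $n \geq 1$ and $2n > n$); hence $\CH^n(R)$ is $2$-divisible. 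The remaining step---deducing that this particular $2$-torsion element of the $2$-divisible group $\CH^n(R)$ actually vanishes---would be completed by appeal to more refined structural information on the $2$-primary torsion of $\CH_0$ for smooth affine $\bar{k}$-varieties, in the spirit of Roitman-type theorems identifying such torsion with the torsion of a suitable generalized Albanese.
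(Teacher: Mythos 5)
Your reduction is sound and coincides with the paper's: via Proposition \ref{pr:generationEquivalence} the question becomes generation of the $2$-torsion line bundle $\sh L$, and---whether one phrases this as a unimodular element of $\sh L^{\oplus n}$ obstructed by the top Chern class, or, as the paper does, cites Murthy's criterion directly for line bundles---everything comes down to showing that the $2$-torsion class $c_1(\sh L)^n$ vanishes in $\CH^n(R)$.

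The final step, however, is a genuine gap, and you have flagged it yourself. Knowing that $\CH^n(R)$ is $2$-divisible (which does follow from the vanishing of $\Hoh^{q}_{\et}(\Spec R, \mu_2^{\otimes n})$ for $q>n$ on an affine $n$-fold together with the Bloch--Ogus and Beilinson--Lichtenbaum machinery) says nothing about whether a given $2$-torsion element is zero: $\mathbb{Q}/\mathbb{Z}$ is $2$-divisible and has plenty of $2$-torsion. The fact you actually need is that $\CH^n(R)=\CH_0(\Spec R)$ is \emph{torsion-free} for a smooth affine $n$-fold over an algebraically closed field; this is exactly what the paper invokes (Murthy, \emph{Zero cycles and projective modules}, Theorem 2.14), and it is a substantive theorem resting on Roitman/Srinivas-type results rather than something recoverable from divisibility considerations. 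Replacing your final sentence with a precise citation of that torsion-freeness statement closes the argument; as written, the decisive input is gestured at but not supplied.
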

\begin{proof}
 Let $\sh L$ be a torsion line bundle on $\Spec R$, or, equivalently, a rank-$1$ projective module on $R$. A result of
 Murthy's, \cite[Corollary 3.16]{murthy}, implies that $\sh L$ may be generated by $n$ elements if and only if
 $c_1(\sh L)^n = 0$. By another result of Murthy's, \cite[Theorem 2.14]{murthy}, the group $\CH^n(R)$ is torsion free,
 so it follows that if $\sh L$ is a $2$-torsion line bundle, then $\sh L$ can be generated by $n$ elements. The
 proposition follows by Proposition \ref{pr:generationEquivalence}.
\end{proof}

\section*{Acknowledgements} This paper owes several great debts to Zinovy Reichstein, who introduced each author,
separately, to the question at hand and who also supplied the argument in the introduction reducing the question of
\'etale algebras of degree $n$ to that of degree $2$. We would also like to thank Uriya First, who very graciously read
an earlier draft. We would like to thank Manuel Ojanguren who read an early draft of this paper, explained the
construction of the example he had given to Uriya First, and encouraged the authors.

\bibliographystyle{plainurl}
\bibliography{etalebibliography}

\end{document}